\numberwithin{equation}{section}
\numberwithin{figure}{section}
\numberwithin{table}{section}
\theoremstyle{plain}
\newtheorem{mainthmIntro}{Theorem}
\newtheorem{mainthmIntroGroup}{Theorem}
\newtheorem{mainthm}{Theorem}
\newtheorem{thm}{Theorem}[section]
\newtheorem{prop}[thm]{Proposition}
\newtheorem{lem}[thm]{Lemma}
\newtheorem{cor}[thm]{Corollary}
\theoremstyle{definition}
\newtheorem{dfn}[thm]{Definition}
\newtheorem{ex}[thm]{Example}
\DeclareMathOperator{\Span}{Span}
\DeclareMathOperator{\Mor}{Mor}
\DeclareMathOperator{\Hom}{Hom}
\DeclareMathOperator{\Aut}{Aut}
\DeclareMathOperator{\Rep}{Rep}
\DeclareMathOperator{\im}{im}
\DeclareMathOperator{\coker}{coker}
\newcommand{\xto}[0]{\xrightarrow}
\newcommand{\into}[0]{\hookrightarrow}
\newcommand{\x}[0]{\times}
\newcommand{\Ø}[0]{\emptyset}
\newcommand{\ph}[0]{\varphi}
\newcommand{\Z}[0]{\mathbb Z}
\newcommand{\F}[0]{\mathbb F}
\newcommand{\C}[0]{\mathbb C}
\newcommand{\PGL}[0]{\mathrm{PGL}}
\newcommand{\D}[0]{\mathrm{D}}
\newcommand{\SD}[0]{\mathrm{SD}}
\DeclarePairedDelimiter{\abs}\lvert\rvert
\DeclarePairedDelimiter{\gen}\langle\rangle
\newcommand{\cal}{\mathcal}
\renewcommand{\tilde}{\widetilde}
\renewcommand{\bar}{\overline}
\newcommand{\ccset}[1]{Cl(#1)}
\newcommand{\free}[1]{{\protect\tilde\Omega(#1)}}
\newcommand{\lc}[1]{\prescript{#1\!}{}}
\renewcommand{\theenumi}{(\roman{enumi})}\renewcommand{\labelenumi}{\theenumi}
\title{The abelian monoid of fusion-stable finite sets is free}
\author[S. P. Reeh]{Sune Precht Reeh}
\address{Department of Mathematics,  Massachusetts Institute of Technology, Cambridge, Massachusetts, USA}
\email{reeh@mit.edu}
\subjclass[2010]{}%{20D20 (20J15, 19A22)}
\thanks{Supported by the Danish National Research Foundation through the Centre for Symmetry and Deformation (DNRF92), and by The Danish Council for Independent Research's Sapere Aude program (DFF – 4002-00224).}
\begin{document}
\begin{abstract}
We show that the abelian monoid of isomorphism classes of $G$-stable finite $S$-sets is free for a finite group $G$ with Sylow $p$-subgroup $S$; here a finite $S$-set is called $G$-stable if it has isomorphic restrictions to $G$-conjugate subgroups of $S$. These $G$-stable $S$-sets are of interest, e.g., in homotopy theory. We prove freeness by constructing an explicit (but somewhat non-obvious) basis, whose elements are in one-to-one correspondence with the $G$-conjugacy classes of subgroups in $S$.
As a central tool of independent interest, we give a detailed description of the embedding of the Burnside ring for a saturated fusion system into its associated ghost ring.
\end{abstract}

\maketitle

\section{Introduction}
Finite $G$-sets, where $G$ is a finite group, appear again and again throughout mathematics, e.g., in homotopy theory. In certain instances we are however interested, not in the $G$-sets themselves, but instead in the shadows cast by $G$-sets when we restrict the actions to a Sylow $p$-subgroup $S$ of $G$. When a finite set $X$ has an action of $S$ that ``looks like'' it comes from a $G$-action, we say that the $S$-set $X$ is $G$-stable (see below). $G$-stable $S$-sets occur for instance in homotopy theory when describing maps between classifying spaces. The isomorphism classes of these $G$-stable $S$-sets together form an abelian monoid with disjoint union as the addition.
In this paper we construct a basis for the abelian monoid of $G$-stable sets when $G$ is a finite group with Sylow $p$-subgroup $S$. Theorem \ref{thmMonoidBasisIntroGroup} states that this abelian monoid is free, and that the basis elements are in one-to-one correspondence with the $G$-conjugacy classes of subgroups in $S$.
Theorem \ref{thmMonoidBasisIntroGroup} is a special case of the more general Theorem \ref{thmMonoidBasisIntro} formulated for a saturated fusion systems $\cal F$ over $S$.
As a main tool for proving Theorem \ref{thmMonoidBasisIntro} we describe the Burnside ring of a saturated fusion system $\cal F$, and its embedding into the associated ghost ring (Theorem \ref{thmYoshidaFusionIntro}).

In more detail, let us consider a finite group $G$ acting on a finite set $X$. We can restrict the action to a Sylow $p$-subgroup $S$ of $G$.
The resulting $S$-set has the property that it stays the same (up to $S$-isomorphism) whenever we change the action via a conjugation map from $G$. More precisely, if $P\leq S$ is a subgroup and $\ph\colon P\to S$ is a homomorphism given by conjugation with some element of $G$, we can turn $X$ into a $P$-set by using $\ph$ to define the action $p.x:=\ph(p)x$. We denote the resulting $P$-set by $\prescript{}{P,\ph}X$. In particular when $incl\colon P\to S$ is the inclusion map, $\prescript{}{P,incl}X$ has the usual restricted action from $S$ to $P$.
When a finite $S$-set $X$ is the restriction of a $G$-set, then $X$ has the property
\begin{equation}\label{charGstable}
\parbox[c]{.9\textwidth}{\emph{$\prescript{}{P,\ph}{X}$ is isomorphic to $\prescript{}{P,incl}X$ as $P$-sets, for all $P\leq S$ and homomorphisms $\ph\colon P\to S$ induced by $G$-conjugation.}}
\end{equation}
Any $S$-set with property \eqref{charGstable} is called \emph{$G$-stable}. Whenever we restrict a $G$-set to $S$, the resulting $S$-set is $G$-stable; however there are $G$-stable $S$-sets whose $S$-actions do not extend to actions of $G$.

The isomorphism classes of finite $S$-sets form a semiring $A_+(S)$ with disjoint union as addition and cartesian product as multiplication. The collection of $G$-stable $S$-sets is closed under addition and multiplication, hence $G$-stable sets form a subsemiring.

\begin{mainthmIntroGroup}\label{thmMonoidBasisIntroGroup}
Let $G$ be a finite group with Sylow $p$-group $S$.
Every $G$-stable $S$-set splits uniquely, up to $S$-isomorphism, as a disjoint union of irreducible $G$-stable sets, and there is a one-to-one correspondence between the irreducible $G$-stable sets and $G$-conjugacy classes of subgroups in $S$.

Hence the semiring of $G$-stable $S$-sets is additively a free commutative monoid with rank equal to the number of $G$-conjugacy classes of subgroups in $S$.
\end{mainthmIntroGroup}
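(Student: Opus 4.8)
The plan is to analyse $G$-stable $S$-sets through the table of marks and an associated ghost ring. Recall that the mark homomorphism sends the isomorphism class of a finite $S$-set $X$ to the tuple $(\Phi_P(X))_{[P]}$ of fixed-point numbers $\Phi_P(X):=\abs{X^P}$, indexed by the $S$-conjugacy classes $[P]$ of subgroups of $S$; this map is injective, it identifies the Burnside ring $A(S)$ with a subring of finite index in the ghost ring $\Omega(S):=\prod_{[P]}\Z$, and — after Möbius inversion over the subconjugacy poset — an element of $A(S)$ lies in the submonoid of genuine $S$-sets precisely when all of its orbit coordinates $c_Q(X)$, the multiplicities of the transitive sets $[S/Q]$, are non-negative. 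The first step is to rephrase $G$-stability in this language: unwinding the definition of $\prescript{}{P,\ph}{X}$ gives $\bigl(\prescript{}{P,\ph}{X}\bigr)^{Q}=X^{\ph(Q)}$, so for $\ph$ equal to conjugation by $g\in G$ property~\eqref{charGstable} says exactly that $\Phi_Q(X)=\Phi_{gQg^{-1}}(X)$ for all $Q\leq S$ and all $g\in G$ with $gQg^{-1}\leq S$; that is, the marks of $X$ are constant on $G$-conjugacy classes of subgroups. Hence the $G$-stable elements form a subring $A(\mathcal{F})\subseteq A(S)$ whose image under the marks lands in the sub-ghost-ring $\Omega(\mathcal{F}):=\prod_{[P]_G}\Z$ indexed by $G$-conjugacy classes, a free abelian group whose rank is the number of $G$-conjugacy classes of subgroups of $S$; and since the rational spans coincide, $A(\mathcal{F})$ itself has this rank. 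It then suffices to produce, inside the monoid of genuine $G$-stable sets, a $\Z$-basis of $A(\mathcal{F})$ over which every $G$-stable set expands with non-negative integer coefficients.

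For each $G$-conjugacy class $[P]$ of subgroups of $S$ I would single out the smallest genuine $G$-stable $S$-set $\alpha_{[P]}$ with $\Phi_P(\alpha_{[P]})\neq 0$, equivalently the smallest $G$-stable set dominating the orbit $[S/P]$ in the Burnside monoid. I would construct it by descending recursion over the subconjugacy order on $G$-classes: begin with $[S/P]$, whose marks already vanish outside the classes $[Q]$ with $Q$ subconjugate to $P$ in $G$, and then repair the marks one $G$-class at a time, going downward, by adding the least number of further orbits $[S/Q]$ that the demand of $G$-constancy (together with the congruences defining $A(S)$) forces. One has to check that this recursion terminates in a well-defined minimum $\alpha_{[P]}$, that $\alpha_{[P]}$ is supported exactly on the $G$-classes subconjugate to $[P]$ with $[P]$ maximal among them, and — this is the crux — that $\alpha_{[P]}$ is a genuine $S$-set, i.e.\ that its orbit coordinates $c_Q(\alpha_{[P]})$ all come out non-negative. \emph{Non-negativity of $\alpha_{[P]}$ is the main obstacle}, because non-negativity of the marks alone is not sufficient for an element of $A(S)$ to be a set; I expect to prove it by a combinatorial argument on the subconjugacy poset, comparing $\alpha_{[P]}$ with the manifestly $G$-stable sets obtained by restricting the $G$-sets $G/Q$ to $S$, and carefully tracking how $G$-fusion amalgamates $S$-conjugacy classes.

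Granting the construction, the rest is formal. Order the $G$-classes by a linear extension of the subconjugacy order; since $\alpha_{[Q]}$ is supported on classes subconjugate to $[Q]$, the square matrix $\bigl(\Phi_P(\alpha_{[Q]})\bigr)_{[P],[Q]}$ is triangular with the nonzero diagonal entries $\Phi_P(\alpha_{[P]})$, so the $\alpha_{[P]}$ are $\Z$-linearly independent; in particular two non-negative integer combinations of them that agree have equal coefficients. For generation I argue by descending induction on the support of a nonzero $G$-stable set $X$. If $[P]$ is maximal in that support, maximality forces $[S/P]$ to be the only transitive set contributing to $\Phi_P(X)$, so $\Phi_P(X)=c_P(X)\cdot\abs{N_S(P)/P}$ with $c_P(X)>0$; and because $\alpha_{[P]}$ is the least genuine $G$-stable set with nonzero $P$-mark, repeatedly subtracting $\alpha_{[P]}$ while $\Phi_P$ stays positive shows that $c_P(\alpha_{[P]})$ divides $c_P(X)$ and that $X\geq n\,\alpha_{[P]}$ for $n:=c_P(X)/c_P(\alpha_{[P]})$. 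Then $X-n\,\alpha_{[P]}$ is again a genuine $G$-stable set, and its support no longer contains $[P]$, so by induction $X$ is a non-negative integer combination of the $\alpha_{[P]}$. Together with linear independence this shows the monoid of $G$-stable $S$-sets is free commutative on $\{\alpha_{[P]}\}_{[P]_G}$. Freeness gives the unique splitting into irreducibles; and since the basis of a free commutative monoid is unique and consists exactly of its indecomposable elements, the irreducible $G$-stable sets are precisely the $\alpha_{[P]}$, hence in bijection with the $G$-conjugacy classes of subgroups of $S$, and the rank of the monoid equals their number.
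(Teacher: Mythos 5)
Your architecture tracks the paper's quite closely — marks constant on $G$-classes, one generator $\alpha_{[P]}$ per class built by downward stabilization, linear independence from triangularity — but the two places you defer are precisely where the proof lives, and neither is supplied. The first is the non-negativity of the orbit coefficients of $\alpha_{[P]}$. You rightly call this the crux, but "comparing with restrictions of the $G$-sets $G/Q$" is not a workable route: those restrictions do not even generate the stable sets (the paper's example \ref{exNonUnique} with $S_5$ and $D_8$), and they give no control on the correction terms. The paper's solution (lemma \ref{lemMoreFixpoints}) is a genuine argument: one starts the recursion not from the single orbit $[S/P]$ but from $\sum_{[P']_S\subseteq[P]_G}\frac{\abs{N_SP}}{\abs{N_SP'}}[S/P']$ so the top marks are already constant, and then shows that whenever $c_{Q'}(X)=0$ for all $Q'\sim_G Q$ and $X$ is already stable strictly above $Q$, one has $\abs{X^Q}\geq\abs{X^{Q'}}$ for $Q$ fully normalized — proved by writing $X^Q$ as a union of fixed-point sets of subgroups strictly between $Q$ and $N_SQ$, transporting them along a normalizer-level conjugation $N_SQ'\to N_SQ$ (which exists by Sylow's theorems / saturation), and applying inclusion–exclusion. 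This, combined with the congruences of proposition \ref{propYoshidaGroup} applied inside $N_SQ$, is what makes the correction coefficients non-negative; without it the recursion may leave the monoid of genuine sets.

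The second gap is in your generation step. That "repeatedly subtracting $\alpha_{[P]}$ while $\Phi_P$ stays positive" yields $X\geq n\,\alpha_{[P]}$, and that $X-n\,\alpha_{[P]}$ is again a genuine set, does not follow from $\alpha_{[P]}$ being minimal among stable sets with nonzero $P$-mark (nor is it clear a unique such minimum exists): minimality gives no control over the orbit coefficients of the difference at smaller subgroups, and asserting the difference is a set is close to assuming the theorem. The paper avoids subtraction entirely. It first proves the $\alpha_P$ form a $\Z$-basis of the full ring $A(\cal F)$ of stable virtual sets, by comparing the cokernel of the mark homomorphism restricted to their span with the obstruction group $Obs(\cal F)$ (theorem \ref{thmYoshidaFusion}); and it normalizes the construction so that $c_P(\alpha_Q)=1$ if $Q\sim_G P$ and $c_P(\alpha_Q)=0$ otherwise, for $P$ fully normalized. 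Then in the unique integral expansion $X=\sum\lambda_Q\alpha_Q$ of a genuine stable set one reads off $\lambda_P=c_P(X)\geq 0$ directly. I would restructure your endgame along those lines, and in any case a proof of the fixed-point inequality above is indispensable.
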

As part of the proof we give an explicit construction of the irreducible $G$-stable sets (see Proposition \ref{propIrreducibleSets}).

It is a well-known fact that any finite $S$-set splits uniquely into orbits (i.e. transitive $S$-sets), and the isomorphism type of a transitive set $S/P$ depends only on the subgroup $P$ up to $S$-conjugation. Theorem \ref{thmMonoidBasisIntroGroup} states that this fact generalizes nicely to $G$-stable $S$-sets, which turns out to be less obvious than it might first appear.

If we consider $G$-sets and restrict their actions to $S$, then two non-isomorphic $G$-sets might very well become isomorphic as $S$-sets. Therefore even though finite $G$-sets decompose uniquely into orbits, we have no guarantee that this decomposition remains unique when we restrict the actions to the Sylow subgroup $S$. In fact, uniqueness of decompositions fails in general when we consider restrictions of $G$-sets to $S$, as demonstrated in Example \ref{exNonUnique} for the symmetric group $\Sigma_5$ and a Sylow 2-subgroup.
It is therefore perhaps a surprise that if we consider \emph{all} $G$-stable $S$-sets, and not just the restrictions of actual $G$-sets, we are again able to write stable sets as a disjoint union of irreducibles in a unique way.

It is also worth noting that the analogue of Theorem \ref{thmMonoidBasisIntroGroup} is false if we consider representations instead of sets: The submonoid of $G$-stable $S$-representations is not a free submonoid of the free monoid of complex $S$-representations when $G=\PGL_3(\F_3)$ and $p=2$. We explain this counterexample in Appendix \ref{secRepresentations}, in particular in Example \ref{exCounterexampleRepr}.

\medskip
The proof of Theorem \ref{thmMonoidBasisIntroGroup} relies only on the way $G$ acts on the subgroups of $S$ by conjugation. We therefore state and prove the theorem in general for abstract saturated fusion systems, which abstractly model the conjugacy relations within a $p$-group induced by an ambient group (see Definitions \ref{dfnFusSys} and \ref{dfnSatFusSys}).

If $\cal F$ is a fusion system over a $p$-group $S$, we say that an $S$-set $X$ is \emph{$\cal F$-stable} if it satisfies
\begin{equation}\label{charFstable}
\parbox[c]{.9\textwidth}{\emph{$\prescript{}{P,\ph}X$ is isomorphic to $\prescript{}{P,incl}X$ as $P$-sets, for all $P\leq S$ and homomorphisms $\ph\colon P\to S$ in $\cal F$.}}
\end{equation}
The $\cal F$-stable $S$-sets form a semiring $A_+(\cal F)$ since the disjoint union and cartesian product of $\cal F$-stable sets is again $\cal F$-stable. Theorem \ref{thmMonoidBasisIntroGroup} then generalizes to Theorem \ref{thmMonoidBasisIntro} below, which we prove instead.
\begin{mainthmIntro}\label{thmMonoidBasisIntro}
Let $\cal F$ be a saturated fusion system over a $p$-group $S$.
Every $\cal F$-stable $S$-set splits uniquely, up to $S$-isomorphism, as a disjoint union of irreducible $\cal F$-stable sets, and there is a one-to-one correspondence between the irreducible $\cal F$-stable sets and conjugacy/isomorphism classes of subgroups in the fusion system $\cal F$.

Hence the semiring $A_+(\cal F)$ of $\cal F$-stable $S$-sets is additively a free commutative monoid with rank equal to the number of conjugacy classes of subgroups in $\cal F$.
\end{mainthmIntro}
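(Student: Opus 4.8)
The plan is to move everything to the language of \emph{marks} (fixed-point numbers), embed the $\cal F$-stable $S$-sets into a ghost ring indexed by $\cal F$-conjugacy classes, and then construct an explicit monoid basis by an induction over the poset of those classes.

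First I would record the elementary translation. Two $P$-sets are isomorphic exactly when they have the same number of fixed points under every subgroup of $P$, and for a morphism $\ph\colon P\to S$ in $\cal F$ and $Q\leq P$ one has $(\prescript{}{P,\ph}X)^Q = X^{\ph(Q)}$; since inclusions are morphisms in $\cal F$ and morphisms restrict, property \eqref{charFstable} is equivalent to: $\abs{X^P}$ depends only on the $\cal F$-isomorphism class of $P$. Consequently the mark homomorphism $X\mapsto(\abs{X^P})_P$ — which already embeds the Burnside ring of $S$ into $\prod_{[P]_S}\Z$, its $([P],[Q])$-entry $\abs{(S/Q)^P}$ being nonzero only when $P$ is subconjugate to $Q$ and nonzero on the diagonal, hence invertible over $\mathbb Q$ — restricts to an embedding of the semiring of $\cal F$-stable $S$-sets into the ghost ring $\free{\cal F}=\prod_{[P]_{\cal F}}\Z$, with all coordinates of an honest (non-virtual) $\cal F$-stable set being $\geq 0$. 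The rational span of the images of the $\cal F$-stable sets is precisely the subspace of $\prod_{[P]_S}\mathbb Q$ on which coordinates agree along $\cal F$-conjugation, which has dimension equal to the number $n$ of $\cal F$-conjugacy classes of subgroups; hence the Grothendieck group $A(\cal F)$ of virtual $\cal F$-stable sets is free of rank $n$, and in particular, once the monoid is shown to be free, its rank is forced to be $n$. This settles the numerical assertion and reduces the theorem to producing a monoid basis.

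Next I would order the $\cal F$-conjugacy classes by subconjugacy in $\cal F$ ($[Q]\leq[P]$ iff some morphism $Q\to P$ exists in $\cal F$) and, inducting from the bottom of this poset upward, construct for each class $[P]$ an $\cal F$-stable $S$-set $\alpha_{[P]}$ characterised by: its marks vanish outside $\{[Q]:[Q]\leq[P]\}$; its mark at $[P]$ equals the least positive value $c_P$ attainable by any $\cal F$-stable $S$-set with that vanishing property; and, subject to that, all of its marks are simultaneously as small as possible. To start the construction one takes $\sum_j n_j[S/P_j]$, the sum over the $S$-conjugacy classes $[P_j]_S$ that fuse to $[P]$ in $\cal F$: among subgroups of equal order the mark matrix is diagonal, so choosing the $n_j$ proportional to $\abs{N_S(P_j):P_j}^{-1}$ equalises the marks at $[P]$ and makes all higher marks vanish; one then repairs the remaining marks at classes $[Q]<[P]$, after a global rescaling, using non-negative combinations of the already-constructed $\alpha_{[Q]}$ and of orbits $S/R$ with $[R]<[P]$, the available congruences on marks of $S$-sets guaranteeing that the repair can be carried out. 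Along the way I would also note that any \emph{virtual} $\cal F$-stable element supported on $\{[Q]\leq[P]\}$ becomes effective after adding a large multiple of $\sum_{[Q]<[P]}\alpha_{[Q]}$, so that the possible values of its $[P]$-mark form the subgroup $c_P\Z$ and are already realised by effective sets.

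Finally I would show that every $\cal F$-stable $S$-set $X$ is uniquely a non-negative integer combination of the $\alpha_{[P]}$: picking $[P]$ maximal among the classes with $\abs{X^P}>0$, the number $\abs{X^P}$ is a positive multiple $c\cdot c_P$ of $c_P$, and one verifies that $X-c\,\alpha_{[P]}$ is again an effective $\cal F$-stable $S$-set with strictly smaller total mark $\sum_{[P]}\abs{X^P}$, so the decomposition follows by induction; uniqueness, and the irreducibility of each $\alpha_{[P]}$, drop out of this triangular structure. Hence the monoid of $\cal F$-stable $S$-sets is free on $\{\alpha_{[P]}\}_{[P]}$, of rank $n$, which is Theorem~\ref{thmMonoidBasisIntro}; Theorem~\ref{thmMonoidBasisIntroGroup} is the special case $\cal F=\cal F_S(G)$. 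The main obstacle is exactly the effectivity of the subtraction $X-c\,\alpha_{[P]}$: cancelling the top mark could a priori push some lower mark below zero, and ruling this out is precisely the purpose of the "simultaneously smallest marks" characterisation of $\alpha_{[P]}$. Making that characterisation legitimate — i.e.\ showing the componentwise minima over all $\cal F$-stable sets with the given support are jointly attained by a single $\cal F$-stable set, and that its lower marks are dominated by those of any $X$ of which it must be a summand — is the technical heart of the argument, and it is here that the inductive construction over the poset together with the virtual-to-effective reduction does the real work.
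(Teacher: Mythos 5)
Your overall skeleton (marks, the ghost ring $\free{\cal F}$, a triangular family of stable sets $\alpha_{[P]}$ built by induction over subconjugacy, rank equal to the number of $\cal F$-classes) matches the paper's, but the step you yourself flag as ``the technical heart'' is genuinely missing, and the mechanism you propose for it is not the right one. Effectivity in the Burnside ring is governed by the orbit coefficients $c_Q(X)$, not by the marks $\Phi_Q(X)$: an element of $A(S)$ with all marks non-negative need not be an $S$-set. So even if you could show that the componentwise minima of marks are jointly attained by a single stable set $\alpha_{[P]}$ and that its lower marks are dominated by those of $X$, that would not give effectivity of $X-c\,\alpha_{[P]}$. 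The paper avoids this entirely by a different observation: the $\alpha_P$ are built so that $c_{P'}(\alpha_P)=1$ and $c_Q(\alpha_P)=0$ for every \emph{fully normalized} $Q\not\sim_{\cal F}P$ (Proposition \ref{propIrreducibleSets}\ref{itemWeyl},\ref{itemNoOtherFullyNormalized}). Hence once the $\alpha_P$ are known to be a $\Z$-basis of $A(\cal F)$ (which follows from the triangularity/cokernel count of Theorem \ref{thmYoshidaFusion}), the coefficient of $\alpha_P$ in any stable $X$ is literally the orbit count $c_P(X)$ at a fully normalized representative, which is automatically $\geq 0$ because $X$ is an honest set. No subtraction-stays-effective argument is needed.

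The same gap recurs inside your construction of $\alpha_{[P]}$: you assert that the lower marks can be ``repaired'' by \emph{non-negative} combinations of orbits, with the congruences only guaranteeing divisibility, not sign. The sign is the hard part. In the paper this is Lemma \ref{lemMoreFixpoints}: for $P$ fully normalized and $X$ with no orbits of type $[S/P']$, $P'\sim_{\cal F}P$, and already stable above $P$, one has $\abs{X^P}\geq\abs{X^{P'}}$ for all $P'\sim_{\cal F}P$; the proof uses the normalizer homomorphisms $N_SQ\to N_SP$ supplied by saturation together with an inclusion--exclusion comparison of the fixed-point sets of the overgroups of $Q$ and $P$. This is exactly what makes the correction coefficients $\lambda_{P'}=(\Phi_P(X)-\Phi_{P'}(X))/\abs{W_SP'}$ non-negative, and zero at fully normalized representatives. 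Without an argument of this kind your $\alpha_{[P]}$ is only a virtual element, your ``minimal marks'' characterisation is not known to be realised by an actual set, and the final decomposition step does not go through. Everything else in your outline (injectivity of marks, the rank count, the triangular start $\sum_j n_j[S/P_j]$ with $n_j$ inversely proportional to $\abs{W_SP_j}$) is fine.
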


One reason for interest in Theorem \ref{thmMonoidBasisIntro} is from homotopy theory, where classifying spaces for groups and maps between them play an important role. For finite groups $G,H$, or in general discrete groups, the homotopy classes of unbased maps $[BG,BH]$ is in bijection with $\Rep(G,H)= H\backslash\Hom(G,H)$, where $H$ acts on $\Hom(G,H)$ by post-conjugation. Hence $[BG,B\Sigma_n]$ corresponds to the different way $G$ can act on a set with $n$ elements up to $G$-isomorphism. This implies that for a finite group $G$ we have $[BG,\coprod_n B\Sigma_n]\cong A_+(G)$ as monoids.

However in homotopy theory one is often only interested in studying classifying spaces, and maps
between them, one prime at a time via the Bousfield-Kan $p$-completion functor $(-)^{\wedge}_p$, see Sections Sections I.1, VI.6 and VII.5 of \cite{BousfieldKan}.
In this context, when $S$ is a $p$-group, a formula of Mislin \cite{Mislin}*{Formula 4} says that $S$ satisfies \[[BS,\coprod_n (B\Sigma_n)^\wedge_p]\simeq A_+(S)\] as monoids. (See also \citelist{\cite{DwyerZabrodsky}\cite{Lannes}\cite{Miller}\cite{CarlssonSullivan}} which Mislin's work builds upon.)
For a general finite group $G$, the monoid $[BG,\coprod_n (B\Sigma_n)^\wedge_p]$ is highly interesting but still mysterious.
Restriction along the inclusion $\iota\colon S\to G$ of a Sylow $p$-subgroup induces a map
\[\iota^*\colon [BG,\coprod_n (B\Sigma_n)^\wedge_p] \to [BS,\coprod_n (B\Sigma_n)^\wedge_p] \simeq A_+(S),\]
and the image must necessarily be contained in the collection of $G$-stable sets $A_+(\cal F_S(G))$, where $\cal F_S(G)$ is the fusion system over $S$ generated by $G$.

The map $\iota^*\colon [BG,\coprod_n (B\Sigma_n)^\wedge_p] \to A_+(\cal F_S(G))$ is an isomorphism in the cases where both the left hand side and the right hand side have been calculated, though both
injectivity and surjectivity is currently unknown in general. In any
case, Theorem \ref{thmMonoidBasisIntro} shows that the algebraic approximation $A_+(\cal F_S(G))$ has a
very regular structure for any finite group $G$, and hence provides
information in understanding the monoid on the left-hand side.

\medskip
An important tool in proving Theorem \ref{thmMonoidBasisIntro} is \emph{the Burnside ring of $\cal F$}, denoted by $A(\cal F)$. We can either define $A(\cal F)$ as the Grothendieck group of the semiring $A_+(\cal F)$ of $\cal F$-stable sets, or we can define $A(\cal F)$ as the subring of $A(S)$ consisting of all $\cal F$-stable elements, where the $\cal F$-stable elements satisfy a property similar to \eqref{charFstable}. Thanks to Proposition \ref{propGrothendieckFusion} we know that these two definitions coincide for saturated fusion systems.

We note that there is an earlier definition by Diaz-Libman in \cite{DiazLibman} of a Burnside ring for $\cal F$, only involving the so-called $\cal F$-centric subgroups of $S$, while the Burnside ring defined here concerns \emph{all} subgroups of $S$ in relation to the fusion system $\cal F$. More precisely, after $p$-localization the Diaz-Libman \emph{centric} Burnside ring for $\cal F$ is the quotient of the Burnside ring $A(\cal F)$ defined here by the non-centric part of $A(\cal F)$, as described in \cite{DiazLibman:Segal}*{Theorem A} and even further in \cite{ReehIdempotent}*{Proposition 4.8}.

\medskip
The Burnside ring of $\cal F$ inherits the homomorphism of marks $\Phi$ from $A(S)$ by restriction, embedding $A(\cal F)$ into a product of a suitable number of copies of $\Z$. As a main step in proving Theorem \ref{thmMonoidBasisIntro}, we show that this mark homomorphism has properties analogous the mark homomorphism for groups:
\begin{mainthmIntro}\label{thmYoshidaFusionIntro}
Let $\cal F$ be a saturated fusion system over a $p$-group $S$, and let $A(\cal F)$ be the Burnside ring of $\cal F$, i.e. the subring consisting of the $\cal F$-stable elements in the Burnside ring of $S$.
Then there is a ring homomorphism $\Phi$ and a group homomorphism $\Psi$ that fit together in the following short-exact sequence of groups:
\[0 \to A(\cal F) \xto{\Phi} \prod_{\substack{\text{conj. classes of}\\ \text{subgroups in $\cal F$}}} \Z\ \xto{\Psi} \prod_{\substack{\text{$[P]_{\cal F}$ conj. class of}\\\text{subgroups in $\cal F$}}} \Z / \abs{W_S P}\Z\ \to 0,\]
where $P\leq S$ is a fully $\cal F$-normalized representative of $[P]_{\cal F}$, and $W_S P := N_S P / P$.\enlargethispage{.5cm}

The map $\Phi$ comes from restricting the mark homomorphism of $A(S)$, and $\Psi$ is given by the $[P]_{\cal F}$-coordinate functions
\[\Psi_{P}(f)  := \sum_{\bar s\in W_S P} f_{\gen s P} \pmod {\abs{W_S P}}\]
when $P$ is a fully normalized representative of the conjugacy class $[P]_{\cal F}$ of subgroups in $\cal F$.
Here $\Psi_P=\Psi_{P'}$ if $P\sim_{\cal F} P'$ are both fully normalized.
\end{mainthmIntro}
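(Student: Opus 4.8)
The plan is to realize everything inside the ghost ring $\Omega(S)=\prod_{[Q]_S}\Z$ of $S$. The homomorphism of marks embeds $A(S)$ into $\Omega(S)$, and under this embedding $A(\cal F)=A(S)\cap\Omega(S)^{\cal F}$, where $\Omega(S)^{\cal F}:=\{f : f_Q=f_{Q'}\text{ whenever }Q\sim_{\cal F}Q'\}\cong\prod_{[P]_{\cal F}}\Z$. Then $\Phi$ is the inclusion $A(\cal F)\into\Omega(S)^{\cal F}$, which is injective. For well-definedness of $\Psi$: the subgroup $\langle s\rangle P$ depends only on the coset $\bar s\in W_S P$, so each summand $f_{\langle s\rangle P}$ makes sense; and if $P\sim_{\cal F}P'$ are both fully normalized, the standard consequence of saturation produces a morphism in $\cal F$ carrying $N_S P'$ isomorphically onto $N_S P$ and $P'$ onto $P$, and this isomorphism transports the sum defining $\Psi_{P'}$ to the one defining $\Psi_P$ by $\cal F$-stability of $f$. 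Finally, $\Psi\circ\Phi=0$ is precisely the classical congruences of Yoshida (and tom Dieck, Dress) for the finite group $S$: for $\xi\in A(S)$ and any $P\le S$ one has $\sum_{\bar s\in W_S P}\bigl(\text{mark of }\xi\text{ at }\langle s\rangle P\bigr)\equiv 0\pmod{\abs{W_S P}}$; apply this at fully normalized $P$ to an $\cal F$-stable $\xi$.

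For surjectivity of $\Psi$ I would use a triangular argument: for each $\cal F$-conjugacy class, with fully normalized representative $P_0$, take $f^{P_0}\in\Omega(S)^{\cal F}$ to be the indicator vector of that class. Since $\langle s\rangle P'\supseteq P'$ and $\cal F$-conjugate subgroups have equal order, $\Psi_{P'}(f^{P_0})=0$ whenever $\abs{P'}\ge\abs{P_0}$ and $P'\not\sim_{\cal F}P_0$, while $\Psi_{P_0}(f^{P_0})=1$ because only $\bar s=1$ contributes. Ordering the $\cal F$-classes by increasing order of the subgroup, the images $\Psi(f^{P_0})$ form a triangular system with a generator on every diagonal entry, hence they generate $\prod_{[P]_{\cal F}\text{ f.n.}}\Z/\abs{W_S P}$.

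The real work is exactness at the middle term. Let $f\in\Omega(S)^{\cal F}$ with $\Psi(f)=0$; it suffices to show $f\in A(S)$, since then $f\in A(S)\cap\Omega(S)^{\cal F}=A(\cal F)$. By the classical theorem for $S$, this amounts to verifying $\sum_{\bar t\in W_S Q}f_{\langle t\rangle Q}\equiv 0\pmod{\abs{W_S Q}}$ for \emph{every} $Q\le S$. For fully normalized $Q$ this is exactly $\Psi_Q(f)=0$, already known. For arbitrary $Q$, choose a fully normalized $P\sim_{\cal F}Q$ and, by saturation, a morphism $\bar\ph\colon N_S Q\to S$ in $\cal F$ with $\bar\ph(Q)=P$; its image $N:=\bar\ph(N_S Q)$ satisfies $P\le N\le N_S P$ and $\abs{N/P}=\abs{W_S Q}$, and $\cal F$-stability of $f$ rewrites the congruence for $Q$ as
\[\sum_{\bar u\in N/P}f_{\langle u\rangle P}\equiv 0\pmod{\abs{N/P}}.\]
So everything reduces to the single assertion: for $P$ fully normalized and every subgroup $\bar K\le W_S P$, the sum $\sum_{\bar u\in\bar K}f_{\langle u\rangle P}$ is divisible by $\abs{\bar K}$. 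The natural route is induction on $\abs{S}$: for $P\ne 1$ pass to the saturated quotient fusion system $N_{\cal F}(P)/P$ over the proper section $W_S P$; the preimage vector $\bar f_{\bar L}:=f_L$ is stable for that system, so (inductively) $\bar f$ lies in its Burnside ring — hence in $A(W_S P)$ — once the congruences defining $\Psi^{N_{\cal F}(P)/P}$ are checked on it, and then restricting $\bar f$ from $W_S P$ to $\bar K$ and applying the classical congruence at the trivial subgroup of $\bar K$ gives the divisibility we want. The case $P=1$ is already covered, since $1$ is fully normalized.

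The main obstacle is exactly the verification just invoked: that the congruences we already know for $f$ (those at fully normalized subgroups of $S$) force the congruences that $\bar f$ must satisfy inside the quotient system $N_{\cal F}(P)/P$, whose relevant weight groups have the form $N_{N_S P}(L)/L=(N_S P\cap N_S L)/L$ rather than genuine $S$- or $\cal F$-normalizers, so the two families do not line up term-by-term. Bridging them requires using the saturation (extension) axiom for $N_{\cal F}(P)$ to extend $\cal F$-isomorphisms over the pertinent normalizer subgroups and to match up the various ``fully normalized'' conditions; handling this bookkeeping is the crux of the argument. Granting it, the induction closes, and injectivity of $\Phi$, the identity $\Psi\circ\Phi=0$, exactness at the middle term, and surjectivity of $\Psi$ assemble into the asserted short-exact sequence.
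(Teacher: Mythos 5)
Your treatment of injectivity of $\Phi$, of $\Psi\circ\Phi=0$, and of surjectivity of $\Psi$ (the triangularity of $\Psi$ with respect to an ordering of $\cal F$-classes by subgroup order) is correct and matches the paper. The problem is exactness at the middle term, which is the actual content of the theorem, and there your argument has a genuine gap that you yourself flag. You reduce ``$\Psi(f)=0\Rightarrow f\in\im\Phi$'' to verifying Dress's congruences for the ghost vector $f$ at \emph{every} subgroup $Q\leq S$, then (after transporting by $\bar\ph$) to the divisibility of $\sum_{\bar u\in\bar K}f_{\gen uP}$ by $\abs{\bar K}$ for subgroups $\bar K\leq W_SP$ of the form $\bar\ph(N_SQ)/P$. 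But the hypothesis $\Psi(f)=0$ only supplies the congruences over the full groups $W_SP$ at fully $\cal F$-normalized $P$; the congruences over proper intermediate subgroups $P\leq N<N_SP$ are additional conditions that hold automatically for mark vectors of genuine elements of $A(S)$ (restrict to $A(N)$ and use that $P\trianglelefteq N$) but are precisely what must be \emph{proved} for $f$. Your proposed fix --- induction on $\abs S$ through the quotient system $N_{\cal F}(P)/P$ --- requires showing that the known congruences imply those defining $\Psi^{N_{\cal F}(P)/P}$, whose sums run over $N_{N_SP}(L)/L$ at subgroups fully normalized in the quotient system; neither the weight groups nor the normalization conditions match the data you have, and you explicitly leave this bridge unbuilt. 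As it stands the argument is circular at its core, and it is not clear the induction can be closed in this form.

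The paper takes a different and non-circular route to middle exactness: it \emph{constructs} explicit $\cal F$-stable elements $\alpha_P\in A(\cal F)$, one per $\cal F$-class (proposition \ref{propIrreducibleSets}, via the stabilization lemma \ref{lemInduceFromAStoAF}, which repeatedly applies proposition \ref{propYoshidaGroup} inside the subgroups $\ph(N_SP')$). The mark matrix of these elements is lower triangular with diagonal entries $\abs{W_SP}$, so the sublattice $H$ they span has index $\prod_{[P]}\abs{W_SP}=\abs{Obs(\cal F)}$ in $\free{\cal F}$; since $\im\Phi|_H\subseteq\im\Phi\subseteq\ker\Psi$ and $\ker\Psi$ has the same index by surjectivity of $\Psi$, all three coincide. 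To complete your proof you would either need to carry out the quotient-system induction in full (including the saturation bookkeeping you defer) or, more economically, produce enough elements of $A(\cal F)$ with triangular marks and run the index-counting argument.
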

Theorem \ref{thmYoshidaFusionIntro} generalizes previous results by Burnside, Dress and others (see \cite{Dress}, \cite{tomDieck}*{Section 1} or \cite{YoshidaSES}) concerning the mark homomorphism and congruence relations for Burnside rings of finite groups, and such congruence relations can for instance be used, in \cite{ReehIdempotent}*{Corollary 6.6}, for determining idempotents in $A(\cal F)_{(p)}$. As with Theorem \ref{thmMonoidBasisIntro}, Theorem \ref{thmYoshidaFusionIntro} is interesting from the viewpoint of homotopy theory: Grodal has recently announced \citelist{\cite{GrodalOberwolfach}\cite{GrodalBurnside}} that the map on Grothendieck groups
\[Gr([BG,\coprod_n (B\Sigma_n)^\wedge_p]) \to A(\cal F_S(G))\]
\emph{is} an isomorphism, so Theorem \ref{thmYoshidaFusionIntro} also provides information about a homotopical object via this map.

\subsection*{Acknowledgements}
The claim of Theorem \ref{thmMonoidBasisIntro} was initially suggested by Matthew Gelvin and formed by his previous interest in similar problems as well as my own work on my masters thesis and the first part of \cite{ReehIdempotent}. Matthew's claim became the subject of
conversations at the University of Copenhagen, involving also Jesper Michael Møller, Bob Oliver,
Kasper Andersen and Jesper Grodal. Due to a seeming lack of systematic structure in the irreducible stable sets, and the fact that the decompositions are only unique up to $S$-isomorphism, the claim was met with initial skepticism, but Kasper Andersen produced a large amount of computer evidence supporting the claim's
validity (using Magma). I would like to thank them all. In particular it was Kasper Andersen's examples that gave me the idea for Lemma \ref{lemMoreFixpoints}, which formed the missing link in the proof of Theorem \ref{thmMonoidBasisIntro}.
I also thank my PhD advisor Jesper Grodal for his helpful suggestions and feedback during the writing of this paper.

\section{Fusion systems}\label{secFusionSystems}
The next few pages contain a very short introduction to fusion systems. The aim is to introduce the terminology from the theory of fusion systems that will be used in the paper, and to establish the relevant notation. For a proper introduction to fusion systems see for instance Part I of ``Fusion Systems in Algebra and Topology'' by Aschbacher, Kessar and Oliver, \cite{AKO}.

\begin{dfn}\label{dfnFusSys}
A \emph{fusion system} $\cal F$ over a $p$-group $S$, is a category where the objects are the subgroups of $S$, and for all $P,Q\leq S$ the morphisms must satisfy:
\begin{enumerate}
\item Every morphism $\ph\in \Mor_{\cal F}(P,Q)$ is an injective group homomorphism, and the composition of morphisms in $\cal F$ is just composition of group homomorphisms.
\item $\Hom_{S}(P,Q)\subseteq \Mor_{\cal F}(P,Q)$, where
    \[\Hom_S(P,Q) = \{c_s \mid s\in N_S(P,Q)\}\]
    is the set of group homomorphisms $P\to Q$ induced by $S$-conjugation.
\item For every morphism $\ph\in \Mor_{\cal F}(P,Q)$, the group isomorphisms $\ph\colon P\to \ph P$ and $\ph^{-1}\colon \ph P \to P$ are elements of $\Mor_{\cal F}(P,\ph P)$ and $\Mor_{\cal F}(\ph P, P)$ respectively.
\end{enumerate}
We also write $\Hom_{\cal F}(P,Q)$ or just $\cal F(P,Q)$ for the morphism set $\Mor_{\cal F}(P,Q)$; and the group $\cal F(P,P)$ of automorphisms is denoted by $\Aut_{\cal F}(P)$.
\end{dfn}

The canonical example of a fusion system comes from a finite group $G$ with a given $p$-subgroup $S$.
The fusion system of $G$ over $S$, denoted $\cal F_S(G)$, is the fusion system over $S$ where the morphisms from $P\leq S$ to $Q\leq S$ are the homomorphisms induced by $G$-conjugation:
\[\Hom_{\cal F_S(G)}(P,Q) := \Hom_G(P,Q) = \{c_g \mid g\in N_G(P,Q)\}.\]
%Unless otherwise mentioned, we only consider $\cal F_S(G)$ in the case where $S$ is a Sylow $p$-subgroup of $G$.
A particular case is the fusion system $\cal F_S(S)$ consisting only of the homomorphisms induced by $S$-conjugation.

Let $\cal F$ be an abstract fusion system over $S$. We say that two subgroup $P,Q\leq S$ are \emph{$\cal F$-conjugate}, written $P\sim_{\cal F} Q$, if they a isomorphic in $\cal F$, i.e. there exists a group isomorphism $\ph\in \cal F(P,Q)$.
The relation of $\cal F$-conjugation is an equivalence relation, and the set of $\cal F$-conjugates to $P$ is denoted by $[P]_{\cal F}$. The set of all $\cal F$-conjugacy classes of subgroups in $S$ is denoted by $\ccset{\cal F}$.
Similarly, we write $P\sim_S Q$ if $P$ and $Q$ are $S$-conjugate, the $S$-conjugacy class of $P$ is written $[P]_{S}$ or just $[P]$, and we write $\ccset S$ for the set of $S$-conjugacy classes of subgroups in $S$.
Since all $S$-conjugation maps are in $\cal F$, any $\cal F$-conjugacy class $[P]_{\cal F}$ can be partitioned into disjoint $S$-conjugacy classes of subgroups $Q\in [P]_{\cal F}$.

We say that $Q$ is \emph{$\cal F$-} or \emph{$S$-subconjugate} to $P$ if $Q$ is respectively $\cal F$- or $S$-conjugate to a subgroup of $P$, and we denote this by $Q\lesssim_{\cal F} P$ or $Q\lesssim_S P$ respectively.
In the case where $\cal F = \cal F_S(G)$, we have $Q\lesssim_{\cal F} P$ if and only if $Q$ is $G$-conjugate to a subgroup of $P$; and the $\cal F$-conjugates of $P$, are just those $G$-conjugates of $P$ which are contained in $S$.

A subgroup $P\leq S$ is said to be \emph{fully $\cal F$-normalized} if $\abs{N_S P} \geq \abs{N_S Q}$ for all $Q\in [P]_{\cal F}$; and similarly $P$ is \emph{fully $\cal F$-centralized} if $\abs{C_S P} \geq \abs{C_S Q}$ for all $Q\in [P]_{\cal F}$.

\begin{dfn}\label{dfnSatFusSys}
A fusion system $\cal F$ over $S$ is said to be \emph{saturated} if the following properties are satisfied for all $P\leq S$:
\begin{enumerate}
\item If $P$ is fully $\cal F$-normalized, then $P$ is fully $\cal F$-centralized, and $\Aut_S(P)$ is a Sylow $p$-subgroup of $\Aut_{\cal F}(P))$.
\item Every homomorphism $\ph\in \cal  F(P,S)$ where $\ph (P)$ is fully $\cal F$-centralized, extends to a homomorphism $\ph\in \cal F(N_\ph,S)$ where
    \[N_\ph:= \{x\in N_S(P) \mid \exists y\in S\colon \ph\circ c_x = c_y\circ \ph\}.\]
\end{enumerate}
\end{dfn}
The saturated fusion systems form a class of particularly nice fusion systems, and the saturation axiom are a way to emulate the Sylow theorems for finite groups. In particular, whenever $S$ is a Sylow $p$-subgroup of $G$, then the Sylow theorems imply that the induced fusion system $\cal F_S(G)$ is saturated (see e.g. \cite{AKO}*{Theorem 2.3}).

In this paper, we shall rarely use the defining properties of saturated fusion systems directly. We shall instead mainly use the following lifting property that saturated fusion systems satisfy:
\begin{lem}[{\cite{RobertsShpectorov}}]\label{lemNormalizerMap}
Let $\cal F$ be saturated. Suppose that $P\leq S$ is fully normalized, then for each $Q\in [P]_{\cal F}$ there exists a homomorphism $\ph\in \cal F(N_S Q, N_S P)$ with $\ph(Q) = P$.
\end{lem}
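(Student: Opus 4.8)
The plan is to produce the map $\ph$ directly from the two saturation axioms, starting from an arbitrary $\cal F$-isomorphism between $Q$ and $P$ and then correcting it by an automorphism of $P$ so that the extension axiom can be applied to its entire normalizer.

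First I would fix an isomorphism $\psi\in\cal F(Q,P)$, which exists because $Q\sim_\cal F P$. Conjugation by $\psi$ carries $\Aut_\cal F(Q)$ isomorphically onto $\Aut_\cal F(P)$, so $\psi\Aut_S(Q)\psi^{-1}$ is a $p$-subgroup of $\Aut_\cal F(P)$. Since $P$ is fully normalized, the first saturation axiom tells us that $P$ is fully centralized and that $\Aut_S(P)$ is a Sylow $p$-subgroup of $\Aut_\cal F(P)$; hence, applying Sylow's theorem inside the finite group $\Aut_\cal F(P)$, there is $\chi\in\Aut_\cal F(P)$ with $\chi(\psi\Aut_S(Q)\psi^{-1})\chi^{-1}\subseteq\Aut_S(P)$. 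Replacing $\psi$ by $\chi\circ\psi$ --- still a morphism of $\cal F$ taking $Q$ to $P$ --- we may thus assume $\psi\Aut_S(Q)\psi^{-1}\subseteq\Aut_S(P)$. Now I would unwind what this buys us: for $x\in N_S Q$ the map $c_x\colon Q\to Q$ lies in $\Aut_\cal F(Q)$, so $\psi\circ c_x\circ\psi^{-1}\in\psi\Aut_S(Q)\psi^{-1}\subseteq\Aut_S(P)$ equals conjugation by some $y\in N_S P\subseteq S$; that is precisely the condition $x\in N_\psi$, so $N_\psi=N_S Q$. Since $\psi(Q)=P$ is fully centralized, the second saturation axiom extends $\psi$ to a morphism $\bar\psi\in\cal F(N_S Q,\,S)$. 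Finally I would check that $\bar\psi$ lands in $N_S P$: for $x\in N_S Q$ and any $u\in Q$ we have $xux^{-1}\in Q$, hence $\bar\psi(x)\bar\psi(u)\bar\psi(x)^{-1}=\bar\psi(xux^{-1})\in\bar\psi(Q)=\psi(Q)=P$, so $\bar\psi(x)$ normalizes $P$. Corestricting, $\ph:=\bar\psi$ is then the desired morphism in $\cal F(N_S Q,\,N_S P)$ with $\ph(Q)=\psi(Q)=P$.

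The one step that deserves genuine care --- and which I would single out as the crux rather than as bookkeeping --- is the Sylow correction: one must make sure that the element conjugating the $p$-subgroup $\psi\Aut_S(Q)\psi^{-1}$ into the Sylow subgroup $\Aut_S(P)$ is itself a morphism of $\cal F$ (it is, being an element of $\Aut_\cal F(P)$), so that pre-composition keeps us inside the fusion system, and that after this correction the obstruction subgroup $N_\psi$ is forced to be all of $N_S Q$, which is exactly what makes the extension axiom deliver a map defined on the whole normalizer. Everything else reduces to routine manipulation of conjugation maps and the defining axioms of a fusion system.
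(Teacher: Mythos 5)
Your argument is correct and is precisely the standard proof of this lemma as given in the sources the paper cites (Roberts--Shpectorov, Lemma 4.5; Aschbacher--Kessar--Oliver, Lemma 2.6(c)) --- the paper itself states the lemma without proof and defers to those references. The Sylow correction of $\psi$ inside $\Aut_{\cal F}(P)$ to force $N_\psi=N_S Q$, followed by the extension axiom (applicable since $P$ fully normalized implies $P$ fully centralized) and the observation that the extension necessarily lands in $N_S P$, is exactly the intended argument.
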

\noindent For the proof, see Lemma 4.5 of \cite{RobertsShpectorov} or Lemma 2.6(c) of \cite{AKO}.

\section{Burnside rings for groups}\label{secBurnside}
In this section we consider the Burnside ring of a finite group $S$, and the semiring of finite $S$-sets. We recall the structure of the Burnside ring $A(S)$ and how to describe the elements and operations of $A(S)$ in terms of fixed points and the homomorphism of marks. In this section $S$ can be any finite group, but later we shall only need the case where $S$ is a $p$-group.

We consider finite $S$-sets up to $S$-isomorphism, and let $A_+(S)$ denote the set of isomorphism classes.
Given a finite $S$-set $X$, we denote the isomorphism class of $X$ by $[X]\in A_+(S)$.
Taking disjoint union as addition and cartesian product as multiplication gives a commutative semiring structure on $A_+(S)$.
Additively $A_+(S)$ is a free commutative monoid, where the basis consists of the (isomorphism classes of) transitive $S$ sets, i.e. $[S/P]$ where $P$ is a subgroup of $S$. Two transitive $S$-sets $S/P$ and $S/Q$ are isomorphic if and only if $P$ is conjugate to $Q$ in $S$.

To describe the multiplication of the semiring $A_+(S)$, it is enough to know the products of basis elements $[S/P]$ and $[S/Q]$. By taking the product $(S/P)\x (S/Q)$ and considering how it breaks into orbits, one reaches the following double coset formula for the multiplication in $A_+(S)$:
\begin{equation}\label{eqSingleBurnsideDoubleCoset}
[S/P]\cdot [S/Q] = \sum_{s \in [P\backslash S /Q]} [S/(P\cap \lc s Q)],
\end{equation}
where $[P\backslash S /Q]$ is a set of representative of the double cosets $PsQ$ with $s\in S$.

The \emph{Burnside ring of $S$}, denoted $A(S)$, is constructed as the Grothendieck group of $A_+(S)$, consisting of formal differences of finite $S$-sets. Additively, $A(S)$ is a free abelian group with the same basis as $A_+(S)$.
For each element $X\in A(S)$ we define $c_P(X)$, with $P\leq S$, to be the coefficients when we write $X$ as a linear combination of the basis elements $[S/P]$ in $A(S)$, i.e.
\[X= \sum_{[P]\in \ccset S} c_P(X) \cdot [S/P].\]
Where $\ccset S$ denotes the set of $S$-conjugacy classes of subgroup in $S$.

The resulting maps $c_P\colon A(S) \to \Z$ are group homomorphisms, but they are \emph{not} ring homomorphisms.
Note also that an element $X$ is in $A_+(S)$, i.e. $X$ is an $S$-set, if and only if $c_P(X)\geq 0$ for all $P\leq S$.

Instead of counting orbits, an alternative way of characterising an $S$-set is counting the fixed points for each subgroup $P\leq S$. For every $P\leq S$ and $S$-set $X$, we denote the number of fixed points by $\Phi_{P}(X) := \abs*{X^P}$, and this number only depends on $P$ up to $S$-conjugation.
Since we have
\[\abs*{(X \sqcup Y)^P}= \abs*{X^P}+\abs* {Y^P}, \quad\text{and}\quad\abs*{(X \x Y)^P}= \abs*{X^P}\cdot\abs*{Y^P}\] for all $S$-sets $X$ and $Y$, the \emph{fixed point map} $\Phi_{P}\colon A_+(S)\to \Z$ extends to a ring homomorphism $\Phi_{P}\colon A(S) \to \Z$.
On the basis elements $[S/P]$, the number of fixed points is given by
\[\Phi_{Q}([S/P]) = \abs*{(S/P)^Q}= \frac{\abs{N_S(Q,P)}}{\abs P},\]
where $N_S(Q,P) = \{s\in S \mid \lc s Q \leq P\}$ is the transporter in $S$ from $Q$ to $P$.
In particular, $\Phi_{Q}([S/P])\neq 0$ if and only if $Q\lesssim_S P$ ($Q$ is conjugate to a subgroup of $P$).

\phantomsection\label{dfnFreeS}\label{dfnObsS}
We have one fixed point homomorphism $\Phi_P$ per conjugacy class of subgroups in $S$, and we combine them into the \emph{homomorphism of marks} $\Phi=\Phi^S\colon A(S) \xto{\prod_{[P]} \Phi_{P}} \prod_{[P]\in \ccset S} \Z$. This ring homomorphism maps $A(S)$ into the product ring $\free{S}:=\prod_{[P]\in \ccset S} \Z$ which is the so-called \emph{ghost ring} for the Burnside ring $A(S)$.

Results by Burnside, Dress and others show that the mark homomorphism is injective, and that the cokernel of $\Phi$ is the  \emph{obstruction group} $Obs(S) := \prod_{[P]\in \ccset S} (\Z/\abs{W_S P}\Z)$ -- where $W_S P := N_S P / P$. These statements are combined in the following proposition, the proof of which can be found in \cite{Dress}, \cite{tomDieck}*{Chapter 1} and \cite{YoshidaSES}*{Lemma 2.1}.
\begin{prop}\label{propYoshidaGroup}
Let $\Psi=\Psi^S\colon \free S \to Obs(S)$ be given by the $[P]$-coordinate functions
\[\Psi_{P}(\xi)  := \sum_{\bar s\in W_S P} \xi_{\gen s P} \pmod {\abs{W_S P}}.\]
Here $\xi_{\gen s P}$ denotes the $[\gen s P]$-coordinate of an element $\xi\in \free S = \prod_{[P]\in \ccset S} \Z$.

The following sequence of abelian groups is then exact:
\[0\to A(S) \xto{\Phi} \free S \xto{\Psi} Obs(S) \to 0.\]
Moreover, $\Phi$ is a ring homomorphism, while $\Psi$ is just a group homomorphism.
\end{prop}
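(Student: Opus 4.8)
To prove Proposition~\ref{propYoshidaGroup}, the plan is to establish the four ingredients of the sequence in turn: injectivity of $\Phi$ (together with the order of its cokernel), the inclusion $\im\Phi\subseteq\ker\Psi$, surjectivity of $\Psi$, and finally exactness at $\free{S}$ by a cardinality argument. Throughout I would fix a total order on $\ccset{S}$ in which $[Q]$ precedes $[P]$ whenever $\abs Q<\abs P$. For injectivity I would look at the table of marks, the integer matrix $M=\bigl(\Phi_Q([S/P])\bigr)_{[Q],[P]\in\ccset{S}}$ expressing $\Phi$ in the transitive-set basis of $A(S)$ and the coordinate basis of $\free{S}$. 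Since $\Phi_Q([S/P])=\abs{N_S(Q,P)}/\abs P$ is nonzero only when $Q\lesssim_S P$, and $Q\lesssim_S P$ together with $\abs Q=\abs P$ forces $[Q]=[P]$, the matrix $M$ is (block) triangular for the chosen order, with diagonal entries $\Phi_P([S/P])=\abs{N_S P}/\abs P=\abs{W_S P}\neq 0$. Hence $M$ is injective over $\Z$ (invertible over $\Q$), so $\Phi$ is injective, and moreover $\abs{\coker\Phi}=\abs{\det M}=\prod_{[P]\in\ccset{S}}\abs{W_S P}=\abs{Obs(S)}$.

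Next I would prove the \emph{Dress congruences} $\Psi\circ\Phi=0$. By additivity of $\Phi$ and $\Psi$, and since $A(S)$ is generated by classes of genuine $S$-sets, it suffices to show $\Psi_P(\Phi(X))=0$ in $\Z/\abs{W_S P}$ for a finite $S$-set $X$ and every $P\leq S$. The point is that $N_S P$ acts on the fixed set $X^P$ while $P$ acts trivially there, so $W_S P$ acts on $X^P$; and for $s\in N_S P$ one has $(X^P)^{\bar s}=X^{\gen{sP}}$, where $\gen{sP}$ denotes the subgroup of $S$ generated by $s$ and $P$. Consequently
\[\sum_{\bar s\in W_S P}\abs*{X^{\gen{sP}}}=\sum_{w\in W_S P}\abs*{(X^P)^w}=\abs{W_S P}\cdot\abs*{X^P/W_S P}\]
by the Cauchy--Frobenius (Burnside) orbit-counting lemma, and the right-hand side is $\equiv 0\pmod{\abs{W_S P}}$; this is exactly the assertion $\Psi_P(\Phi(X))=0$.

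For surjectivity of $\Psi$ I would use that the defining formula for $\Psi$ is itself triangular for the chosen order: when $\bar s\neq\bar e$ the subgroup $\gen{sP}$ strictly contains $P$, so $\abs{\gen{sP}}>\abs P$, whereas the single term $\bar s=\bar e$ contributes exactly $\xi_P$. Thus, given a target $(a_P)_{[P]}\in Obs(S)$, one solves $\Psi_P(\xi)=a_P$ by descending induction on $\abs P$: at the stage for $[P]$, every coordinate of $\xi$ occurring in $\Psi_P(\xi)$ other than $\xi_P$ has already been chosen, so one takes $\xi_P$ to be any integer lifting the residue the equation requires. Hence $\Psi$ is surjective. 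Combining this with the congruence step, $\Psi$ induces a surjection $\free{S}/\im\Phi\twoheadrightarrow Obs(S)$ of finite abelian groups; by the first step both have order $\prod_{[P]}\abs{W_S P}$, so this surjection is an isomorphism, giving $\ker\Psi=\im\Phi$ and hence exactness. That $\Phi$ is a ring homomorphism while $\Psi$ is merely a group homomorphism is immediate from the formulas.

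The only step carrying genuine content is the middle one, the congruence $\Psi\circ\Phi=0$: the first and third steps are linear algebra over $\Z$ once the subgroups are ordered by size, and the last step is pure counting. Within that step the substantive observation — and the main obstacle if one has not seen it (it goes back to Dress) — is to recognize $\sum_{\bar s}\abs{X^{\gen{sP}}}$ as an application of Burnside's lemma to the $W_S P$-action on $X^P$; everything after that is formal.
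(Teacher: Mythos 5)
Your proof is correct and complete. The paper itself does not prove this proposition but cites Dress, tom Dieck and Yoshida for it; your argument is precisely the standard one from those sources (triangularity of the table of marks for injectivity and the cokernel order, the Cauchy--Frobenius count on the $W_SP$-action on $X^P$ for the congruences, triangularity of $\Psi$ for surjectivity, and a comparison of finite orders for exactness), and it is also the same strategy the paper deploys later to prove the fusion-system analogue, Theorem~\ref{thmYoshidaFusion}.
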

The strength of this result is that it enables one to perform calculations for the Burnside ring $A(S)$ inside the much nicer product ring $\free S$, where we identify each element $X\in A(S)$ with its fixed point vector $(\Phi_Q(X))_{[Q]\in \ccset S}$.

\begin{cor}\label{corYoshidaGroup}
For a normal subgroup $P\leq S$, and an $S$-set $X$, we have
\[\sum_{\bar s\in S/P} \Phi_{\gen{s}P}(X) \equiv 0 \pmod{\abs{S/P}}.\]
\end{cor}
\begin{proof}
  Applying Proposition \ref{propYoshidaGroup} with $W_S P = S/P$, gives $\Psi_P(\Phi(X))=0$ in $\Z / \abs{P/S}\Z$.
\end{proof}

\section{Stable sets for a fusion system}
Let $\cal F$ be a fusion system over a $p$-group $S$. In this section we rephrase the property of $\cal F$-stability in terms of the fixed point homomorphisms, and show in Example \ref{exNonUnique} how Theorem \ref{thmMonoidBasisIntro} can fail for a group $G$ if we only consider $S$-sets that are restrictions of $G$-sets, instead of considering all $G$-stable sets. We also consider two possible definitions for the Burnside ring of a fusion system -- these agree if $\cal F$ is saturated.
The proof of Theorem \ref{thmMonoidBasisIntro} begins in section \ref{secProofOfTheorem} in earnest.

A finite $S$-set $X$ is said to be \emph{$\cal F$-stable} if it satisfies \eqref{charFstable}:
\begin{equation*}
\parbox[c]{.9\textwidth}{\emph{$\prescript{}{P,\ph}X$ is isomorphic to $\prescript{}{P,incl}X$ as $P$-sets, for all $P\leq S$ and homomorphisms $\ph\colon P\to S$ in $\cal F$.}}
\end{equation*}
In order to define $\cal F$-stability not just for $S$-sets, but for all elements of the Burnside ring, we extend $\prescript{}{P,\ph}X$ to all $X\in A(S)$. Given a homomorphism $\ph\in \cal F(P,S)$ and an $S$-set $X$, the $P$-set $\prescript{}{P,\ph}X$ was defined as $X$ with the action restricted along $\ph$, that is $p.x:=\ph(p)x$ for $x\in X$ and $p\in P$. This construction then extends linearly to a ring homomorphism $r_{\ph}\colon A(S) \to A(P)$, and we denote $\prescript{}{P,\ph}X:= r_\ph(X)$ for all $X\in A(S)$. In this way \eqref{charFstable} makes sense for all $X\in A(S)$.

It is possible to state $\cal F$-stability purely in terms of fixed points and the homomorphism of marks for $A(S)$. The following lemma seems to be generally known, but not published anywhere, so we include it for the sake of completeness. A version of this lemma was included in the PhD thesis of Gelvin as \cite{Gelvin}*{Proposition 3.2.3}, and previously the lemma has least been implicitely used by Broto-Levi-Oliver in the proof of \cite{BrotoLeviOliver}*{Proposition 5.5}. A special case of the lemma, for bisets, was also proved by Ragnarsson-Stancu as part (b) and (c) of \cite{RagnarssonStancu}*{Lemma 4.8}.
\begin{lem}[\cite{Gelvin}]
The following are equivalent for all elements $X\in A(S)$:
\begin{enumerate}
\item\label{itemPhiBurnsideEq}  $\prescript{}{P,\ph} X = \prescript{}{P,incl} X$ in $A(P)$ for all $\ph\in \cal F(P,S)$ and $P\leq S$.
\item\label{itemPhiStable} $\Phi_{P}(X) = \Phi_{\ph P}(X)$ for all $\ph\in \cal F(P,S)$ and $P\leq S$.
\item\label{itemFConjStable} $\Phi_{P}(X) = \Phi_{Q}(X)$ for all pairs $P,Q\leq S$ with $P\sim_{\cal F} Q$.
\end{enumerate}
\end{lem}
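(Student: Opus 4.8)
The plan is to prove the cycle of implications $\ref{itemPhiBurnsideEq}\Rightarrow\ref{itemPhiStable}\Rightarrow\ref{itemFConjStable}\Rightarrow\ref{itemPhiBurnsideEq}$. Everything hinges on a single compatibility identity: for all $P\leq S$, all $\ph\in\cal F(P,S)$ and all subgroups $Q\leq P$, the composite $\Phi^P_Q\circ r_\ph\colon A(S)\to\Z$ equals $\Phi^S_{\ph Q}$, where $\Phi^P_Q$ and $\Phi^S_Q$ denote the $Q$-th mark on $P$-sets and on $S$-sets respectively. Both sides are additive, so it is enough to verify this on the additive basis $\{[S/R]\}$ of $A(S)$, where it reduces to the elementary observation that an element $x$ of an $S$-set $X$ is fixed by the restricted $Q$-action $q.x=\ph(q)x$ precisely when $x\in X^{\ph Q}$, i.e. $(\prescript{}{P,\ph}X)^Q=X^{\ph Q}$. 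Specialising $Q=P$ gives $\Phi^P_P(r_\ph X)=\Phi^S_{\ph P}(X)$, and the case $\ph=incl$ (so $\ph Q=Q$) gives $\Phi^P_Q(r_{incl}X)=\Phi^S_Q(X)$ for every $Q\leq P$.

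With this in hand the first two implications are immediate. For $\ref{itemPhiBurnsideEq}\Rightarrow\ref{itemPhiStable}$, apply $\Phi^P_P$ to the equality $\prescript{}{P,\ph}X=\prescript{}{P,incl}X$ in $A(P)$ to obtain $\Phi^S_{\ph P}(X)=\Phi^S_P(X)$. For $\ref{itemPhiStable}\Rightarrow\ref{itemFConjStable}$, given $P\sim_{\cal F}Q$ choose an isomorphism $\ph\in\cal F(P,Q)$, regard it as a morphism in $\cal F(P,S)$ with $\ph P=Q$, and read off $\Phi^S_P(X)=\Phi^S_{\ph P}(X)=\Phi^S_Q(X)$. (The converse $\ref{itemFConjStable}\Rightarrow\ref{itemPhiStable}$ is equally trivial, since every $\ph\in\cal F(P,S)$ gives $P\sim_{\cal F}\ph P$; thus \ref{itemPhiStable} and \ref{itemFConjStable} are really two phrasings of one hypothesis.)

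The implication carrying the actual content is $\ref{itemFConjStable}\Rightarrow\ref{itemPhiBurnsideEq}$, and its engine is the injectivity of the homomorphism of marks $\Phi^P$ for the group $P$, i.e.\ Proposition \ref{propYoshidaGroup} applied to $P$ in place of $S$. Fix $\ph\in\cal F(P,S)$; by injectivity the desired identity $r_\ph(X)=r_{incl}(X)$ in $A(P)$ follows once we know $\Phi^P_Q(r_\ph X)=\Phi^P_Q(r_{incl}X)$ for every $Q\leq P$, which by the compatibility identity says $\Phi^S_{\ph Q}(X)=\Phi^S_Q(X)$. Since the restriction $\ph|_Q\colon Q\to\ph Q$ is again a morphism of $\cal F$ (a formal consequence of the fusion-system axioms), we have $Q\sim_{\cal F}\ph Q$, and this equality is exactly an instance of \ref{itemFConjStable}. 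I expect this last step to be the only real obstacle: it is precisely where one passes from the fixed-point data of \ref{itemFConjStable} back to an honest identity of virtual $P$-sets, which is legitimate only because of the non-trivial theorem of Dress and others \cite{Dress} recorded in Proposition \ref{propYoshidaGroup}, namely that an element of $A(P)$ is determined by its tuple of fixed-point counts. The rest is routine unwinding of the definition of $r_\ph$ and of $\cal F$-conjugacy.
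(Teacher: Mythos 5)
Your proof is correct and follows essentially the same route as the paper: the compatibility identity $\Phi^P_Q(r_\ph X)=\Phi_{\ph Q}(X)$, the injectivity of the mark homomorphism $\Phi^P$ for the group $P$ to pass from fixed-point data back to an identity in $A(P)$, and the observation that restrictions of $\cal F$-morphisms give $\cal F$-conjugacies. Arranging the argument as a cycle rather than two biconditionals is an immaterial difference.
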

We shall primarily use \ref{itemPhiStable} and \ref{itemFConjStable} to characterize $\cal F$-stability.

\begin{proof}
Let $\Phi^P\colon A(P)\to \free P$ be the homomorphism of marks for $A(P)$, and note that $\Phi^P_{R}(\prescript{}{P,incl} X) = \Phi_{R}(X)$ for all $R\leq P\leq S$.

By the definition of the $P$-action on $\prescript{}{P,\ph} X$, we have $(\prescript{}{P,\ph} X)^R = X^{\ph R}$ for any $S$-set $X$ and all subgroups $R\leq P$. This generalizes to
\[\Phi^P_{R}(\prescript{}{P,\ph} X) = \Phi_{\ph R}(X)\]
for $X\in A(S)$.

Assume \ref{itemPhiBurnsideEq}. Then we immediately get
\[\Phi_{P}(X) =\Phi^P_{P}(\prescript{}{P,incl} X) = \Phi^P_{P}(\prescript{}{P,\ph} X) = \Phi_{\ph P}(X)\]
for all $P\leq S$ and $\ph\in \cal F(P,S)$; which proves \ref{itemPhiBurnsideEq}$\Rightarrow$\ref{itemPhiStable}.

Assume \ref{itemPhiStable}. Let $P\leq S$ and $\ph\in \cal F(P,S)$. By assumption, we have $\Phi_{\ph R}(X) = \Phi_{R}(X)$ for all $R\leq P$, hence
\[\Phi^P_{R}(\prescript{}{P,\ph} X) =  \Phi_{\ph R}(X) = \Phi_{R}(X) = \Phi^P_{R}(\prescript{}{P,incl} X).\]
Since $\Phi^P$ is injective, we get $\prescript{}{P,\ph} X = \prescript{}{P,incl} X$; so \ref{itemPhiStable}$\Rightarrow$\ref{itemPhiBurnsideEq}.

Finally, we have \ref{itemPhiStable}$\Leftrightarrow$\ref{itemFConjStable} because $Q$ is $\cal F$-conjugate to $P$ exactly when $Q$ is the image of a map $\ph\in \cal F(P,S)$ in the fusion system.
\end{proof}

\begin{dfn}\label{dfnMonoidOfStableSets}
We let $A_+(\cal F)\subseteq A_+(S)$ be the set of all the $\cal F$-stable sets, and by property \ref{itemFConjStable} the sums and products of stable elements are still stable, so $A_+(\cal F)$ is a subsemiring of $A_+(S)$.
\end{dfn}

Suppose that $\cal F=\cal F_S(G)$ is the fusion system for a group with $S\in Syl_p(G)$.
Let $X\in A_+(G)$ be a $G$-set, and let $\prescript{}S X$ be the same set with the action restricted to the Sylow $p$-subgroup $S$. If we let $P\leq S$ and $c_g\in \Hom_{\cal F_S(G)}(P,S)$ be given; then $x\mapsto gx$ is an isomorphism of $P$-sets $\prescript{}{P,incl} X \cong \prescript{}{P,c_g} X$. The restriction $\prescript{}{S,incl} X$ is therefore $G$-stable.

Restricting the group action from $G$ to $S$ therefore defines a homomorphism of semirings $A_+(G) \to A_+(\cal F_S(G))$, but as the following example shows, this map need not be injective nor surjective.

\begin{ex}\label{exNonUnique}
The symmetric group $\Sigma_5$ on $5$ letters has Sylow $2$-subgroups isomorphic to the dihedral group $\D_8$ of order $8$. We then consider $\D_8$ as embedding in $\Sigma_5$ as one of the Sylow $2$-subgroups.
Let $H,K$ be respectively Sylow $3$- and $5$-subgroups of $\Sigma_5$.

The transitive $\Sigma_5$-set $[\Sigma_5/H]$ contains $40$ elements and all the stabilizers have odd order (they are conjugate to $H$). When we restrict the action to $\D_8$, the stabilizers therefore become trivial so the $\D_8$-action is free, hence $[\Sigma_5/H]$ restricts to the $\D_8$-set $5\cdot [\D_8/1]$, that is $5$ disjoint copies of the free orbit $[\D_8/1]$.
Similarly, the transitive $\Sigma_5$-set $[\Sigma_5/K]$ restricts to $3\cdot[\D_8/1]$.

These two restrictions of $\Sigma_5$-sets are not linearly independent as $\D_8$-sets -- the $\Sigma_5$-sets $3\cdot [\Sigma_5/H]$ and $5\cdot [\Sigma_5/K]$ both restrict to $15\cdot [\D_8/1]$. If the restrictions of $\Sigma_5$-sets were to form a free abelian monoid, then the set $[\D_8/1]$ would have to be the restriction of an $\Sigma_5$-set as well; and since $[\D_8/1]$ is irreducible as a $\D_8$-set, it would have to be the restriction of an irreducible (hence transitive) $\Sigma_5$-set. However $\Sigma_5$ has no subgroup of index $8$, hence there is no transitive $\Sigma_5$ with $8$ elements.

This shows that the restrictions of $\Sigma_5$-sets to $\D_8$ do not form a free abelian monoid, and we also see that $[\D_8/1]$ is an example of an $\cal F_{\D_8}(\Sigma_5)$-stable set ($\Phi_1([\D_8/1])=8$ and $\Phi_Q([\D_8/1])=0$ for $1\neq Q\leq \D_8$) which cannot be given the structure of an $\Sigma_5$-set.
\end{ex}

To define the Burnside ring of a fusion system $\cal F$, we have two possibilities: We can consider the semiring of all the $\cal F$-stable $S$-sets and take the Grothendieck group of this. Alternatively, we can first take the Grothendieck group for all $S$-sets to get the Burnside ring of $S$, and then afterwards we consider the subring herein consisting of all the $\cal F$-stable elements. The following proposition implies that the two definitions coincide for saturated fusion systems.
\begin{prop}\label{propGrothendieckFusion}
Let $\cal F$ be a fusion system over a $p$-group $S$, and consider
the subsemiring $A_+(\cal F)$ of $\cal F$-stable $S$-sets in the semiring $A_+(S)$ of finite $S$-sets.

This inclusion induces a ring homomorphism from the Grothendieck group of $A_+(\cal F)$ to the Burnside ring $A(S)$, which is injective.

If $\cal F$ is saturated, then the image of the homomorphism is the subring of $A(S)$ consisting of the $\cal F$-stable elements.
\end{prop}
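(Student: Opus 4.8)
The plan follows the three assertions. For the first: the monoid $(A_+(S),\sqcup)$ is a free commutative monoid, hence cancellative, so the submonoid $A_+(\cal F)$ is cancellative too; a cancellative commutative monoid embeds into its Grothendieck group, and an injection of such monoids induces an injection of Grothendieck groups. Applying this to $A_+(\cal F)\into A_+(S)$, and recalling that $A(S)$ \emph{is} the Grothendieck group of $A_+(S)$, one gets an injective homomorphism from the Grothendieck group of $A_+(\cal F)$ into $A(S)$; it is a ring homomorphism because the inclusion preserves products and the multiplicative unit $[S/S]$. Its image is the set of differences $[X]-[Y]$ with $X,Y\in A_+(\cal F)$, which is easily checked to be a subring of $A(S)$.

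For the second assertion: this image lies inside the $\cal F$-stable part of $A(S)$ by the fixed-point characterisation of $\cal F$-stability (condition~\ref{itemFConjStable} above), since the marks $\Phi_P$ agree on $\cal F$-conjugate subgroups for any $\cal F$-stable $S$-set, hence for any difference of two such. The content of the proposition is the reverse inclusion when $\cal F$ is saturated, and I would reduce it to the single claim: \emph{there is an $\cal F$-stable $S$-set $W$ with $c_P(W)>0$ for every subgroup $P\leq S$.} Granting the claim, let $X\in A(S)$ be $\cal F$-stable and put $N:=\max_{[P]\in\ccset S}\abs{c_P(X)}$, a finite maximum since $S$ has only finitely many conjugacy classes of subgroups. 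Then $c_P(X+N\cdot W)=c_P(X)+N\,c_P(W)\geq 0$ for all $P$, so $X+N\cdot W$ is an honest $S$-set, and it is $\cal F$-stable since $X$ and $W$ are, hence $X+N\cdot W\in A_+(\cal F)$; therefore $X=(X+N\cdot W)-N\cdot W$ is a difference of $\cal F$-stable $S$-sets and lies in the image.

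Constructing such a $W$ is the remaining task, and I expect it to be the main obstacle, since it is where the saturation axioms genuinely enter. As $[S/S]$ is already $\cal F$-stable, it suffices to produce, for each conjugacy class $[P]_{\cal F}$ with fully normalized representative $P$, an $\cal F$-stable $S$-set $\beta_{[P]}$ with $c_Q(\beta_{[P]})>0$ for all $Q\in[P]_{\cal F}$, and then set $W:=\sum_{[P]_{\cal F}}\beta_{[P]}$. I would build these (or $W$ directly) by descending induction on the poset of conjugacy classes of subgroups of $S$: start from $[S/S]$ and, at each downward step, adjoin an $\cal F$-stable correction term supported on the strictly smaller subgroups, using Lemma~\ref{lemNormalizerMap} to supply the transporter maps that keep these corrections $\cal F$-stable and with nonnegative orbit multiplicities. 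This is essentially the mechanism behind the explicit construction of the irreducible $\cal F$-stable sets later in the paper (cf.\ the proof of Theorem~\ref{thmMonoidBasisIntro}), from which the claim above can also be read off directly; so the write-up may either run this induction here or simply forward-reference that construction.
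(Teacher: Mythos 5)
Your argument for the first assertion (injectivity) is essentially identical to the paper's: both rest on the fact that $A_+(S)$ is free, hence cancellative, so that $i(X)=i(Y)$ in $A(S)$ forces $X=Y$ already in $A_+(\cal F)$. For the second assertion the two routes genuinely differ. The paper simply observes that the image is the set of differences of $\cal F$-stable sets and postpones the claim that every $\cal F$-stable element of $A(S)$ is such a difference to Corollary~\ref{corBurnsideBasis}: once the $\alpha_P$'s are known to be an additive basis of $A(\cal F)$, one writes $X=\sum\lambda_P\alpha_P$ and separates positive from negative coefficients. You instead reduce to the existence of a single $\cal F$-stable set $W$ with $c_P(W)>0$ for every $P\leq S$ and use the translation trick $X=(X+N\cdot W)-N\cdot W$. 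This is a correct and slightly more economical reduction: it needs only the \emph{existence} of enough stable sets (one per class in $\ccset{\cal F}$ with positive orbit counts on that whole class), not the basis theorem, and such sets are already supplied by the cruder stabilization of \cite{BrotoLeviOliver} quoted inside the proof of Lemma~\ref{lemInduceFromAStoAF} -- so your route could bypass the delicate Lemma~\ref{lemMoreFixpoints} entirely. (For the record, the $\alpha_P$ of Proposition~\ref{propIrreducibleSets} do the job as well, since $c_{P'}(\alpha_P)=\abs{N_SP}/\abs{N_SP'}>0$ for every $P'\sim_{\cal F}P$.) Be aware, though, that like the paper you have only \emph{reduced} the second assertion to a construction carried out later; neither your write-up nor the paper's proof of this proposition is self-contained at this point, and the forward reference is essential.
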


\begin{proof}
Let $Gr$ be the Grothendieck group of $A_+(\cal F)$, and let $I\colon Gr\to A(S)$ be the induced group homomorphism coming from the inclusion $i\colon A_+(\cal F)\into A_+(S)$.

An element of $Gr$ is a formal difference $X-Y$ where $X$ and $Y$ are $\cal F$-stable sets. Assume now that $X-Y$ lies in $\ker I$. This means that $i(X)-i(Y)=0$ in $A(S)$; and since $A_+(S)$ is a free commutative monoid, we conclude that $i(X)=i(Y)$ as $S$-sets. But $i$ is just the inclusion map, so we must have $X=Y$ in $A_+(\cal F)$ as well, and $X-Y=0$ in $Gr$. Hence $I\colon Gr\to A(S)$ is injective.

It is clear that the difference of two $\cal F$-stable sets is still $\cal F$-stable, so $\im I$ lies in the subring of $\cal F$-stable elements. If $\cal F$ is saturated, then the converse holds, and all $\cal F$-stable elements of $A(S)$ can be written as a difference of $\cal F$-stable sets; however the proof of this must be postponed to Corollary \ref{corBurnsideBasis} below.
\end{proof}

\begin{dfn}
Let $\cal F$ be saturated. We define \emph{the Burnside ring of $\cal F$}, denoted $A(\cal F)$, to be the subring consisting of the $\cal F$-stable elements in $A(S)$.

Once we have proven Corollary \ref{corBurnsideBasis}, we will know that $A(\cal F)$ is also the Grothendieck group of the semiring $A_+(\cal F)$ of $\cal F$-stable sets.
\end{dfn}

\subsection{Proving Theorems \ref{thmMonoidBasis} and \ref{thmYoshidaFusion}}\label{secProofOfTheorem}
The proof of Theorem \ref{thmMonoidBasis} falls into several parts: We begin by constructing some $\cal F$-stable sets $\alpha_P$ satisfying certain properties -- this is the content of \ref{lemInduceFromAStoAF}-\ref{propIrreducibleSets}. We construct one $\alpha_P$ per $\cal F$-conjugacy class of subgroups, and these are the $\cal F$-stable sets which we will later show are the irreducible stable sets. A special case of the construction was originally used by Broto, Levi and Oliver in \cite{BrotoLeviOliver}*{Proposition 5.5} to show that every saturated fusion system has a characteristic biset.

In \ref{corLinIndependent}-\ref{corBurnsideBasis} we then proceed to show that the constructed $\alpha_P$'s are linearly independent, and that they generate the Burnside ring $A(\cal F)$. When proving that the $\alpha_P$'s generate $A(\cal F)$, the same proof also establishes Theorem \ref{thmYoshidaFusion}.

Finally, we use the fact that the $\alpha_P$'s form a basis for the Burnside ring, to argue that they form an additive basis already for the semiring $A_+(\cal F)$, completing the proof of Theorem \ref{thmMonoidBasis} itself.

\medskip
As mentioned, we first construct an $\cal F$-stable set $\alpha_P$ for each $\cal F$-conjugation class of subgroups. The idea when constructing $\alpha_P$ is that we start with the single orbit $[S/P]$ which we then stabilize: We run through the subgroups $Q\leq S$ in decreasing order and add orbits to the constructed $S$-set such that it becomes $\cal F$-stable at the conjugacy class of $Q$ in $\cal F$.
The stabilization procedure is handled in the following technical Lemma \ref{lemInduceFromAStoAF}, which is then applied in Proposition \ref{propIrreducibleSets} to construct the $\alpha_P$'s.

Recall that $c_P(X)$ denotes the number of $(S/P)$-orbits in $X$, and $\Phi_P(X)$ denotes the number of $P$-fixed points.
\begin{lem}
\label{lemInduceFromAStoAF}
Let $\cal F$ be a saturated fusion system over a $p$-group $S$, and let $\cal H$ be a collection of subgroups of $S$ such that $\cal H$ is closed under taking $\cal F$-subconjugates, i.e. if $P\in \cal H$, then $Q\in \cal H$ for all $Q\lesssim_{\cal F} P$.

Assume that $X\in A_+(S)$ is an $S$-set satisfying $\Phi_P(X) = \Phi_{P'}(X)$ for all pairs $P\sim_{\cal F}\nobreak P'$, with $P,P'\not\in \cal H$.
Assume furthermore that $c_{P}(X) = 0$ for all $P\in \cal H$.

Then there exists an $\cal F$-stable set $X'\in A_+(\cal F)\subseteq A_+(S)$ satisfying $\Phi_P(X') = \Phi_P(X)$ and $c_{P}(X') = c_{P}(X)$ for all $P\not\in \cal H$; and also satisfying $c_{P}(X') = c_{P}(X)$ for all $P\leq S$ which are fully normalized in $\cal F$.
In particular, for a $P\in \cal H$ which is fully normalized, we have $c_{P}(X')=0$.
\end{lem}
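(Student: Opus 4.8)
The plan is to build $X'$ from $X$ by adding orbits one conjugacy class at a time, working through the subgroups of $S$ in order of \emph{decreasing} size, and stopping the correction at each class once the fixed-point counts become $\cal F$-invariant there. Concretely, I would induct on the poset of $\cal F$-conjugacy classes of subgroups \emph{not} in $\cal H$, ordered so that larger subgroups (and their $\cal F$-conjugates) are treated first. At a typical stage we have an $S$-set $Y$ that is already $\cal F$-stable at every class strictly above the current class $[R]_{\cal F}$, agrees with $X$ in marks and orbit-counts above $\cal H$, and has $c_Q(Y)=0$ for $Q\in\cal H$; we want to adjust $Y$ at $[R]_{\cal F}$ without disturbing anything already fixed. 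Choose $R$ fully $\cal F$-normalized in its class, so $\Phi_R(Y)$ is the maximal fixed-point value we will force on the whole class (this maximality is what keeps the correction nonnegative). For each $Q\in[R]_{\cal F}$ we have $\Phi_Q(Y)\le\Phi_R(Y)$, and we add $\bigl(\Phi_R(Y)-\Phi_Q(Y)\bigr)/\abs{W_S Q}$ copies of the orbit $[S/Q]$ — one checks via the mark formula $\Phi_Q([S/Q])=\abs{N_S Q}/\abs Q=\abs{W_S Q}$ that this raises $\Phi_Q$ to exactly $\Phi_R(Y)$. The key point is that adding orbits $[S/Q]$ with $Q$ in the class $[R]_{\cal F}$ only changes marks $\Phi_{Q'}$ for $Q'\lesssim_S Q$, hence for $Q'\lesssim_{\cal F} R$; since $R\notin\cal H$ and $\cal H$ is closed under $\cal F$-subconjugacy, all such $Q'$ are also outside $\cal H$, and because we process classes top-down these are all at-or-below the current level, so nothing already stabilized is touched. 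After this step $Y$ is $\cal F$-stable at $[R]_{\cal F}$ as well.

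The first arithmetic point to nail down is that the correction coefficients are nonnegative integers. Nonnegativity is immediate from the choice of $R$ as fully normalized (so $\Phi_R(Y)\ge\Phi_Q(Y)$); integrality I would get from the congruence relations of Proposition \ref{propYoshidaGroup} applied to $Y\in A(S)$: the $\Psi_Q$-relation forces $\sum_{\bar s\in W_S Q}\Phi_{\gen s Q}(Y)\equiv 0\pmod{\abs{W_S Q}}$, but since all the subgroups $\gen s Q$ strictly containing $Q$ lie at a higher level and are $\cal F$-conjugate into the corresponding larger classes where $Y$ is already $\cal F$-stable and agrees with $X$ — actually the cleanest route is to observe that $Y$ differs from $X$ only by orbits $[S/P]$ with $P$ strictly larger than the classes processed so far, so modulo $\abs{W_S Q}$ the quantity $\Phi_R(Y)-\Phi_Q(Y)$ can be matched up using the already-established $\cal F$-stability at higher levels; I expect this bookkeeping to be the main technical obstacle and will want to phrase the induction hypothesis carefully enough that integrality falls out automatically, perhaps by carrying "$X'-X$ is a nonnegative combination of $[S/P]$ with $P\notin\cal H$" as part of the hypothesis.

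The remaining bookkeeping is routine. Since the correction orbits all have $P\notin\cal H$, we never change $c_P$ for $P\in\cal H$, so $c_P(X')=0=c_P(X)$ there; in particular this holds for fully normalized $P\in\cal H$. For $P\notin\cal H$: the orbit-count $c_P$ is only affected when we process the class $[P]_{\cal F}$ itself, and there we add orbits $[S/P']$ for $P'\in[P]_{\cal F}$; but for $P$ fully normalized the coefficient we add at $P$ is $\bigl(\Phi_P(Y)-\Phi_P(Y)\bigr)/\abs{W_S P}=0$, so $c_P(X')=c_P(X)$ for every fully normalized $P$ — whether or not $P\in\cal H$. For $\Phi_P$ with $P\notin\cal H$, the construction forces $\Phi_P(X')=\Phi_{R}(X)$ where $R$ is the fully normalized representative of $[P]_{\cal F}$, and since $X$ itself already satisfies $\Phi_P(X)=\Phi_R(X)$ by the hypothesis (as $P,R\notin\cal H$), we get $\Phi_P(X')=\Phi_P(X)$. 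Finally $X'$ is $\cal F$-stable by construction (stable at every class not in $\cal H$, and also at classes in $\cal H$ since there all marks were already forced to agree — or one simply notes the process makes $\Phi$ constant on \emph{every} $\cal F$-class by the time it finishes), and $X'\in A_+(S)$ since it is $X$ plus a nonnegative integer combination of transitive sets. This completes the construction.
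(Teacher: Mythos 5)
Your overall strategy --- stabilize top-down by adding, for each class, $(\Phi_R(Y)-\Phi_Q(Y))/\abs{W_S Q}$ copies of $[S/Q]$ with $R$ fully normalized --- is the same as the paper's, but the proposal has two genuine gaps and one inverted piece of bookkeeping. The serious gap is your claim that nonnegativity of the coefficients ``is immediate from the choice of $R$ as fully normalized (so $\Phi_R(Y)\ge\Phi_Q(Y)$)''. That inequality is false for general $S$-sets: for $Q\sim_{\cal F}R$ with $Q\not\sim_S R$ and $Q$ not fully normalized, the single orbit $[S/Q]$ has $\Phi_Q=\abs{W_S Q}>0$ but $\Phi_R=0$. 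The inequality holds only under the extra hypotheses that $c_{Q'}(Y)=0$ for every $Q'$ in the class (this is exactly why the lemma assumes $c_P(X)=0$ for $P\in\cal H$ --- a hypothesis your argument never invokes for this purpose) and that $Y$ is already stable at all strictly larger classes; even then it requires the normalizer map $N_SQ\to N_SR$ supplied by saturation and an inclusion--exclusion comparison of fixed points of overgroups (the paper's Lemma~\ref{lemMoreFixpoints}, which the author singles out as the missing link of the whole proof). You have assumed away the hardest step. The second gap is integrality, which you explicitly leave open; its resolution again needs the normalizer map: one applies the congruences of Proposition~\ref{propYoshidaGroup} both to $X$ as an $S$-set at $Q$ and to $X$ restricted to $\ph(N_SQ)\le N_SR$ at $R$, and matches the two sums term by term using stability at strictly larger subgroups, both moduli being $\abs{W_S Q}$.

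The bookkeeping error: you induct over classes \emph{not} in $\cal H$ and assert that the correction orbits $[S/Q]$ have $Q\notin\cal H$, ``so we never change $c_P$ for $P\in\cal H$''. This is backwards. $X$ is already stable at every class outside $\cal H$; the classes that need correction are exactly those \emph{in} $\cal H$, so the added orbits $[S/Q]$ have $Q\in\cal H$, and $c_Q(X')$ is genuinely nonzero for non-fully-normalized $Q\in\cal H$ in general (which is why the lemma only asserts $c_P(X')=c_P(X)$ for $P\notin\cal H$ and for fully normalized $P$, not for all of $\cal H$). Your inference ``$R\notin\cal H$ and $\cal H$ is closed under subconjugates, hence every $Q'\lesssim_{\cal F}R$ is outside $\cal H$'' uses the wrong direction of the closure property: downward closure of $\cal H$ makes its complement upward closed, so subconjugates of $R\notin\cal H$ may well lie in $\cal H$. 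The correct induction (the paper's) removes one maximal class of $\cal H$ at a time; maximality guarantees that all strict overgroups lie outside $\cal H$, which is precisely what both the integrality and the nonnegativity arguments need.
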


\begin{proof}
We proceed by induction on the size of the collection $\cal H$.
If $\cal H=\Ø$, then $X$ is $\cal F$-stable by assumption, so $X':= X$ works.

Assume that $\cal H\neq \Ø$, and let $P\in \cal H$ be maximal under $\cal F$-subconjugation as well as fully normalized.

Let $P'\sim_{\cal F} P$. Then there is a homomorphism $\ph\in {\cal F}(N_S P', N_S P)$ with $\ph(P')=P$ by Lemma \ref{lemNormalizerMap} since $\cal F$ is saturated.
The restriction of $S$-actions to $\ph(N_S P')$ gives a ring~homo\-morphism $A(S) \to A(\ph(N_S P'))$ that preserves the fixed-point homomorphisms $\Phi_{Q}$ for $Q\leq \ph(N_S P')\leq N_S P$.

If we consider the $S$-set $X$ with the action restricted to $\ph(N_S P')$, we can apply Corollary \ref{corYoshidaGroup} for the normal subgroup $P=\ph (P')\unlhd \ph(N_S P')$ to get
\[\sum_{\bar s\in \ph(N_S P')/ P} \Phi_{\gen s P}(X) \equiv 0 \pmod{\abs{\ph(N_S P')/P}}.\]
Similarly, we have $P'\unlhd N_S P'$, with which Corollary \ref{corYoshidaGroup} gives us
\[\sum_{\bar s\in N_S P'/P'} \Phi_{\gen s P'}(X) \equiv 0 \pmod{\abs{N_S P'/P'}}.\]
Since $P$ is maximal in $\cal H$, we have by assumption $\Phi_{Q}(X) = \Phi_{Q'}(X)$ for all $Q\sim_{\cal F} Q'$ where $P$ is $\cal F$-conjugate to a \emph{proper} subgroup of $Q$. Specifically, we have
\[\Phi_{\gen {\ph (s)}P}(X) = \Phi_{\ph(\gen sP')}(X) = \Phi_{\gen sP'}(X)\] for all $s\in N_S P'$ with $s\not\in P'$. It then follows that
\begin{align*}
\Phi_{P}(X)-\Phi_{P'}(X)&= \sum_{\bar s\in \ph(N_S P')/P} \Phi_{\gen s P}(X) - \sum_{\bar s\in N_S P' / P'} \Phi_{\gen s P'}(X)
\\ &\equiv 0-0 \pmod{\abs{W_SP'}}.
\end{align*}
We can therefore define $\lambda_{P'} := (\Phi_{P}(X)-\Phi_{P'}(X)) / \abs{W_SP'}\in \Z$.

Using the $\lambda_{P'}$ as coefficients, we construct a new $S$-set
\[\tilde X:=X +\!\! \sum_{[P']_S\subseteq [P]_{\cal F}} \!\!\lambda_{P'}\cdot[S/P'] \quad\in A(S).\]
Here $[P]_{\cal F}$ is the collection of subgroups that are $\cal F$-conjugate to $P$. The sum is then taken over one representative from each $S$-conjugacy class contained in $[P]_{\cal F}$.

A priori, the $\lambda_{P'}$ might be negative, and as a result $\tilde X$ might not be an $S$-set. In the original construction of \cite{BrotoLeviOliver}, this problem is circumvented by adding copies of
\[\sum_{[P']_S \subseteq [P]_{\cal F}} \frac{\abs{N_S P}}{\abs{N_S P'}}\cdot [S/P']\]
until all the coefficients are non-negative.

It will however be shown in Lemma \ref{lemMoreFixpoints} below, that under the assumption that $c_{P'}(X)=0$ for $P'\sim_{\cal F} P$, then $\lambda_{P'}$ is always non-negative, and $\lambda_{P'}=0$ if $P'$ is fully normalized. Hence $\tilde X$ is already an $S$-set without further adjustments.

We clearly have $c_{Q}(\tilde X) = c_{Q}(X)$ for all $Q\not\sim_{\cal F} P$, in particular for all $Q\not\in \cal H$. Furthermore, if $P'\sim_{\cal F} P$ is fully normalized, then $c_{P'}(\tilde X)=c_{P'}(X)+\lambda_{P'}=c_{P'}(X)$.

Because $\Phi_{Q}([S/P']) =0$ unless $Q\lesssim_S P'$, we see that $\Phi_{Q}(\tilde X) = \Phi_{Q}(X)$ for every $Q\not\in \cal H$. Secondly, we calculate $\Phi_{P'}(\tilde X)$ for each $P'\sim_{\cal F} P$:
\begin{align*}
\Phi_{P'}(\tilde X) &= \Phi_{P'}(X) + \sum_{[\tilde P]_S\subseteq [P]_{\cal F}} \lambda_{\tilde P}\cdot\Phi_{P'}([S/\tilde P])
\\ &= \Phi_{P'}(X) + \lambda_{P'}\cdot\Phi_{P'}([S/P']) = \Phi_{P'}(X) + \lambda_{P'}\abs{W_S P'}
\\ &= \Phi_{P}(X);
\end{align*}
which is independent of the choice of $P'\sim_{\cal F} P$.

We define $\cal H':= \cal H \setminus [P]_{\cal F}$ as $\cal H$ with the $\cal F$-conjugates of $P$ removed. Because $P$ is maximal in $\cal H$, the subcollection $\cal H'$ again contains all $\cal F$-subconjugates of any $H\in \cal H'$.

By induction we can apply Lemma \ref{lemInduceFromAStoAF} to $\tilde X$ and the smaller collection $\cal H'$. We get an $X'\in A_+(\cal F)$ with $\Phi_{Q}(X') = \Phi_{Q}(\tilde X)$ and $c_{Q}(X') = c_{Q}(\tilde X)$ for all $Q\not\in \cal H'$; and such that $c_{Q}(X')=0$ if $Q\in \cal H'$ is fully normalized.

It follows that $\Phi_{Q}(X') = \Phi_{Q}(\tilde X)=\Phi_{Q}(X)$ and $c_{Q}(X')=c_{Q}(\tilde X) = c_{Q}(X)$ for all $Q\not\in \cal H$, and we also have $c_{Q}(X')=0$ if $Q\in \cal H$ is fully normalized.
\end{proof}

\begin{lem}\label{lemMoreFixpoints}
Let $\cal F$ be a saturated fusion system over a $p$-group $S$, and let $P\leq S$ be a fully normalized subgroup.

Suppose that $X$ is an $S$-set with $c_{P'}(X)=0$ for all $P'\sim_{\cal F} P$, and satisfying that $X$ is already $\cal F$-stable for subgroups larger than $P$, i.e.
$\abs[\big]{X^R}=\abs[\big]{X^{R'}}$ for all $R\sim_{\cal F} R'$ where $P$ is $\cal F$-conjugate to a proper subgroup of $R$.

Then $\abs[\big]{X^P} \geq \abs[\big]{X^{P'}}$ for all $P'\sim_{\cal F}P$.
\end{lem}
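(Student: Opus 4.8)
The plan is to compare $X^{P}$ and $X^{P'}$ by regarding both as sets acted on by the single group $W_S P'$, and to locate their difference inside the Burnside ring $A(W_S P')$, using the structure described in Proposition~\ref{propYoshidaGroup}.

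Since $P$ is fully normalized and $P'\sim_{\cal F}P$, Lemma~\ref{lemNormalizerMap} supplies a homomorphism $\varphi\in\cal F(N_S P',N_S P)$ with $\varphi(P')=P$. As $P'$ is normal in $N_S P'$ and $P$ is normal in $N_S P$, the fixed set $X^{P'}$ is a $W_S P'$-set and $X^{P}$ is a $W_S P$-set, and via $\varphi$ we may also regard $X^{P}$ as a $W_S P'$-set, call it $\bar\varphi^*(X^{P})$. I would then compute the marks of $\beta:=[X^{P'}]-[\bar\varphi^*(X^{P})]\in A(W_S P')$: for a nontrivial subgroup $\bar R=R/P'$ of $W_S P'$ (so $P'<R\leq N_S P'$) one has $\abs{(X^{P'})^{\bar R}}=\abs{X^{R}}$ and $\abs{(\bar\varphi^*(X^{P}))^{\bar R}}=\abs{X^{\varphi(R)}}$, and these agree because $R\sim_{\cal F}\varphi(R)$ while $P$ is $\cal F$-conjugate to the proper subgroup $P'<R$, so the $\cal F$-stability hypothesis imposed on $X$ applies to $R$. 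Thus every nontrivial mark of $\beta$ vanishes, and the triangular structure of the mark matrix (see Proposition~\ref{propYoshidaGroup} applied to $W_S P'$) forces $\beta=m\cdot[W_S P'/1]$ for some $m\in\Z$; evaluating the mark at the trivial subgroup gives $m\cdot\abs{W_S P'}=\abs{X^{P'}}-\abs{X^{P}}$.

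It remains to prove $m\leq0$. Suppose $m>0$. Comparing coefficients of the free orbit $[W_S P'/1]$ and using that $\bar\varphi^*(X^{P})$ is an honest $W_S P'$-set, $X^{P'}$ must contain at least one free $W_S P'$-orbit; since $N_S P'$ preserves $X^{P'}$ with $P'$ acting trivially, this yields a point $x\in X^{P'}$ whose stabilizer in $N_S P'$ is exactly $P'$. Let $R\leq S$ be the stabilizer of $x$ under the full $S$-action. Then $P'\leq R$ (as $x\in X^{P'}$), whereas $R\cap N_S P'$, the stabilizer of $x$ in $N_S P'$, equals $P'$; in other words $N_R(P')=R\cap N_S P'=P'$. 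But $R$ is a $p$-group, and a proper subgroup of a $p$-group is always properly contained in its normalizer, so $R=P'$. Hence the $S$-orbit of $x$ is isomorphic to $S/P'$, whence $c_{P'}(X)\geq1$, contradicting $c_{P'}(X)=0$. Therefore $m\leq0$, so $\abs{X^{P}}=\abs{X^{P'}}-m\abs{W_S P'}\geq\abs{X^{P'}}$; since $P'\sim_{\cal F}P$ was arbitrary, the lemma follows.

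The crux is the sign of $m$: the mark computation only determines $\beta$ up to the multiplicity of the free orbit, and it is precisely the interplay of the hypothesis $c_{P'}(X)=0$ with the elementary fact that normalizers grow inside $p$-groups that rules out $m>0$. The remaining ingredients --- the bookkeeping relating $W_S P'$-sets, fixed-point counts and orbit/mark decompositions, and the standard fact that an element of $A(W_S P')$ with all nontrivial marks zero is a $\Z$-multiple of the free orbit --- are routine.
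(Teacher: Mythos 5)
Your proof is correct, but it is organized differently from the paper's. You package the comparison of $X^{P}$ and $X^{P'}$ as an element $\beta=[X^{P'}]-[\bar\varphi^*(X^{P})]$ of the Burnside ring $A(W_SP')$, kill all nontrivial marks using the stability hypothesis for subgroups strictly containing $P'$, and then pin down the sign of the remaining free-orbit coefficient $m$ by combining $c_{P'}(X)=0$ with the fact that normalizers grow in $p$-groups. The paper instead argues directly with fixed-point sets: it uses $c_{P'}(X)=0$ and normalizer growth to write $X^{P'}=\bigcup_i X^{A_i}$ as a union over the subgroups $P'<A_i\leq N_SP'$, observes that $X^{P}$ contains the corresponding union $\bigcup_i X^{\varphi(A_i)}$, and proves the two unions have equal cardinality by inclusion--exclusion (the intersections being fixed points of the generated subgroups $\gen{A_i}_{i\in\Lambda}$, to which the stability hypothesis applies). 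The two arguments use the same two hypotheses in the same roles --- your mark computation at nontrivial subgroups is the inclusion--exclusion step in disguise, and your ``no free $W_SP'$-orbit in $X^{P'}$'' is exactly the paper's covering $X^{P'}=\bigcup_i X^{A_i}$ --- but your formulation through $A(W_SP')$ makes the source of the inequality (a single integer $m\leq 0$ multiplying $\abs{W_SP'}$) especially transparent, and as a bonus it re-derives the congruence $\Phi_P(X)\equiv\Phi_{P'}(X)\pmod{\abs{W_SP'}}$ that Lemma \ref{lemInduceFromAStoAF} needs, whereas the paper's version avoids any Burnside-ring formalism for the Weyl group and is more elementary. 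All the individual steps you label as routine (that $X^{P'}$ is a $W_SP'$-set, that $\bar\varphi$ is well defined since $\varphi(N_SP')\leq N_SP$, that an element of $A(W_SP')$ with vanishing nontrivial marks is a multiple of the free orbit, and that a point with $N_SP'$-stabilizer exactly $P'$ has $S$-stabilizer exactly $P'$) do check out.
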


\begin{proof}
Let $Q\sim_{\cal F} P$ be given. Because $P$ is fully normalized, there exists by Lemma \ref{lemNormalizerMap} a homomorphism $\ph\colon N_S Q \into N_S P$ in $\cal F$, with $\ph(Q)=P$.

Let $A_1,\dotsc,A_k$ be the subgroups of $N_S Q$ that strictly contain $Q$, i.e. $Q< A_i\leq N_S Q$. We put $B_i:= \ph(A_i)$, and thus also have $P< B_i \leq N_S P$. We let $C_1,\dotsc,C_\ell$ be the subgroups of $N_SP$ strictly containing $P$ which are not the image (under $\ph$) of some $A_i$. Hence $B_1,\dotsc,B_k,C_1,\dotsc,C_\ell$ are all the different subgroups of $N_SP$ strictly containing $P$.
We denote the set $\{1,\dotsc,k\}$ of indices by $I$, and also $J:=\{1,\dotsc,\ell\}$.

Because $c_{Q}(X)=c_{P}(X)=0$ by assumption, no orbit of $X$ is isomorphic to $S/Q$, hence no element in $X^Q$ has $Q$ as a stabilizer. Let $x\in X^Q$ be any element, and let $K> Q$ be the stabilizer of $x$; so $x\in X^K \subseteq X^Q$. Since $K$ is a $p$-group, there is some intermediate group $L$ with $Q\lhd L \leq K$; hence $x\in X^L$ for some $Q<L\leq N_SQ$. We conclude that
\[X^Q=\bigcup_{i\in I} X^{A_i}.\]
With similar reasoning we also get
\[X^P=\bigcup_{i\in I} X^{B_i}\cup\bigcup_{j\in J} X^{C_j}.\]
The proof is then completed by showing
\[\abs*{X^P} = \abs[\Big]{\bigcup_{i\in I} X^{B_i}\cup\bigcup_{j\in J} X^{C_j}} \geq \abs[\Big]{\bigcup_{i\in I} X^{B_i}} \overset{(*)}= \abs[\Big]{\bigcup_{i\in I} X^{A_i}} = \abs*{X^Q}.\]
We only need to prove the equality $(*)$.

Showing $(*)$ has only to do with fixed points for the subgroups $A_i$ and $B_i$; and because $B_i=\ph(A_i) \sim_{\cal F} A_i$ are subgroups that strictly contain $P$ and $Q$ respectively, we have $\abs*{X^{B_i}}=\abs*{X^{A_i}}$ by assumption.

To get $(*)$ for the unions $\cup A_i$ and $\cup B_i$ we then have to apply the inclusion-exclusion principle:
\[\abs[\Big]{\bigcup_{i\in I} X^{B_i}} = \sum_{\Ø\neq \Lambda \subseteq I} (-1)^{\abs* \Lambda +1} \abs[\Big]{\bigcap_{i\in \Lambda} X^{B_i}} = \sum_{\Ø\neq \Lambda \subseteq I} (-1)^{\abs* \Lambda +1} \abs*{X^{\gen{B_i}_{i\in \Lambda}}}.\]
Here $\gen{B_i}_{i\in \Lambda}\leq N_SP$ is the subgroup generated by the elements of $B_i$'s with $i\in \Lambda\subseteq I$. Recalling that $B_i=\ph(A_i)$ by definition, we have $\gen{B_i}_{i\in \Lambda} = \gen{\ph(A_i)}_{i\in \Lambda} = \ph(\gen{A_i}_{i\in \Lambda})$, and consequently
\[\sum_{\Ø\neq \Lambda \subseteq I} (-1)^{\abs* \Lambda +1} \abs*{X^{\gen{B_i}_{i\in \Lambda}}} = \sum_{\Ø\neq \Lambda \subseteq I} (-1)^{\abs* \Lambda +1} \abs*{X^{\ph(\gen{A_i}_{i\in \Lambda})}}.\]
Because $Q<A_i\leq N_S Q$, we also have $Q<\gen{A_i}_{i\in \Lambda}\leq N_S Q$, by assumption we therefore get $\abs*{X^{\ph(\gen{A_i}_{i\in \Lambda})}} = \abs*{X^{\gen{A_i}_{i\in \Lambda}}}$ for all $\Ø\neq \Lambda \subseteq I$. It then follows that
\[\sum_{\Ø\neq \Lambda \subseteq I} (-1)^{\abs* \Lambda +1} \abs*{X^{\ph(\gen{A_i}_{i\in \Lambda})}} = \sum_{\Ø\neq \Lambda \subseteq I} (-1)^{\abs* \Lambda +1} \abs*{X^{\gen{A_i}_{i\in \Lambda}}} = \dotsb = \abs[\Big]{\bigcup_{i\in I} X^{A_i}},\]
where we use the inclusion-exclusion principle in reverse. We have thus shown the equality $\abs*{\bigcup_{i\in I} X^{B_i}} = \abs*{\bigcup_{i\in I} X^{A_i}}$ as required.
\end{proof}

Applying the technical Lemma \ref{lemInduceFromAStoAF}, we can now construct the irreducible $\cal F$-stable sets $\alpha_P$ for $P \leq S$ as described in the following proposition. That the $\alpha_P$'s are in fact irreducible, or even that they are unique, will not be shown until the proof of Theorem \ref{thmMonoidBasis} itself.
\begin{prop}\label{propIrreducibleSets} Let $\cal F$ be a saturated fusion system over a $p$-group $S$.

For each $\cal F$-conjugacy class $[P]_{\cal F}\in \ccset{\cal F}$ of subgroups, there is an $\cal F$-stable set $\alpha_P\in A_+(\cal F)$ such that
\begin{enumerate}\renewcommand{\theenumi}{(\roman{enumi})}\renewcommand{\labelenumi}{\theenumi}
\item\label{itemSmaller} $\Phi_{Q}(\alpha_P) = 0$ unless $Q$ is $\cal F$-subconjugate to $P$.
\item\label{itemWeyl} $c_{P'}(\alpha_P)=1$ and $\Phi_{P'}(\alpha_P) = \abs{W_S P'}$ when $P'$ is fully normalized and $P'\sim_{\cal F} P$.
\item\label{itemNoOtherFullyNormalized} $c_{Q}(\alpha_P) = 0$ when $Q$ is fully normalized and $Q\not\sim_{\cal F} P$.
\end{enumerate}
\end{prop}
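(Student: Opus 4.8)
The plan is to construct $\alpha_P$ by starting from the single orbit $[S/P_0]$, where $P_0$ is a fully normalized representative of $[P]_{\cal F}$, and then feeding this into the stabilization machinery of lemma \ref{lemInduceFromAStoAF}. First I would fix $P_0 \leq S$ fully normalized with $P_0 \sim_{\cal F} P$, and set $X := [S/P_0] \in A_+(S)$. I need to identify the right collection $\cal H$ of subgroups to which lemma \ref{lemInduceFromAStoAF} applies. The natural choice is
\[\cal H := \{Q \leq S \mid Q \lnsim_{\cal F} P \text{ or } Q \not\lesssim_{\cal F} P\} = \{Q\leq S \mid Q \text{ is not } \cal F\text{-conjugate to } P \text{ and not } \cal F\text{-subconjugate via an } \cal F\text{-conjugate of } P\},\]
more cleanly: let $\cal H$ be the set of all subgroups $Q$ with $Q \lesssim_{\cal F} P_0$ but $Q \not\sim_{\cal F} P_0$ — i.e. the proper $\cal F$-subconjugates of $P$. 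Wait, I should check this is closed under $\cal F$-subconjugation: if $Q \in \cal H$ and $R \lesssim_{\cal F} Q$, then $R \lesssim_{\cal F} P_0$, and $R \not\sim_{\cal F} P_0$ since otherwise $P_0 \lesssim_{\cal F} R \lesssim_{\cal F} Q \lnsim_{\cal F} P_0$, contradiction. Good.

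Next I would verify the two hypotheses of lemma \ref{lemInduceFromAStoAF} for $X = [S/P_0]$ and this $\cal H$. For the fixed-point hypothesis: if $R \sim_{\cal F} R'$ with $R, R' \notin \cal H$, I claim $\Phi_R([S/P_0]) = \Phi_{R'}([S/P_0])$. Since $\Phi_R([S/P_0]) \neq 0$ forces $R \lesssim_S P_0$ hence $R \lesssim_{\cal F} P_0$, and $R \notin \cal H$ then forces $R \sim_{\cal F} P_0$, i.e. $R$ is itself fully-normalized-conjugate-class to $P_0$; the only relevant case is when both $R$ and $R'$ are $\cal F$-conjugate to $P_0$, and then neither is $S$-subconjugate to a proper part — here one checks $\Phi_R([S/P_0]) = \abs{N_S(R,P_0)}/\abs{P_0}$ depends only on the $\cal F$-class when we use that a morphism $R \to P_0$ in $\cal F$ that is an isomorphism... actually this needs the normalizer lifting lemma \ref{lemNormalizerMap} to compare $\abs{N_S R}$ with $\abs{N_S P_0}$. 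The cleaner statement: if $R \sim_{\cal F} P_0$ then $\Phi_R([S/P_0]) = 0$ unless $R \sim_S P_0$, in which case it equals $\abs{W_S P_0}$; and if $R \not\lesssim_{\cal F} P_0$ both sides are $0$. For the orbit hypothesis: $c_Q([S/P_0]) = 0$ for all $Q \in \cal H$ since the only nonzero coefficient of $[S/P_0]$ is $c_{P_0} = 1$ and $P_0 \notin \cal H$.

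Then lemma \ref{lemInduceFromAStoAF} produces $\alpha_P := X' \in A_+(\cal F)$ with $\Phi_Q(\alpha_P) = \Phi_Q([S/P_0])$ and $c_Q(\alpha_P) = c_Q([S/P_0])$ for all $Q \notin \cal H$, and $c_Q(\alpha_P) = 0$ for all fully normalized $Q \in \cal H$. From this, property \ref{itemSmaller} follows because the only $Q \notin \cal H$ with $\Phi_Q \neq 0$ are those $\cal F$-subconjugate to $P$, and any $Q$ with $\Phi_Q(\alpha_P) \neq 0$ is either outside $\cal H$ (hence $\lesssim_{\cal F} P$) or inside $\cal H$ (hence also $\lesssim_{\cal F} P_0$) — so in all cases $Q \lesssim_{\cal F} P$. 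Property \ref{itemWeyl}: for $P'$ fully normalized with $P' \sim_{\cal F} P$, we have $P' \notin \cal H$, so $c_{P'}(\alpha_P) = c_{P'}([S/P_0])$; since all fully normalized members of $[P]_{\cal F}$ are $S$-conjugate (by \ref{lemNormalizerMap}, normalizer orders are all equal to the maximum, forcing $S$-conjugacy — this needs a small argument, or one picks $P' = P_0$ WLOG up to $S$-conjugacy), this is $1$, and $\Phi_{P'}(\alpha_P) = \Phi_{P'}([S/P_0]) = \abs{N_S P'}/\abs{P'} = \abs{W_S P'}$. Property \ref{itemNoOtherFullyNormalized}: for $Q$ fully normalized with $Q \not\sim_{\cal F} P$, either $Q \in \cal H$, in which case lemma \ref{lemInduceFromAStoAF} directly gives $c_Q(\alpha_P) = 0$, or $Q \notin \cal H$ and $Q \not\lesssim_{\cal F} P$, in which case $c_Q(\alpha_P) = c_Q([S/P_0]) = 0$ since $Q \not\lesssim_S P_0$.

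The main obstacle I anticipate is the bookkeeping around fully normalized subgroups inside a fixed $\cal F$-conjugacy class: verifying that all fully normalized representatives of $[P]_{\cal F}$ are mutually $S$-conjugate (so that $c_{P'}$ and $\Phi_{P'}$ genuinely depend only on the $\cal F$-class among such $P'$), and checking that the fixed-point hypothesis of lemma \ref{lemInduceFromAStoAF} holds for $X = [S/P_0]$ — the content is that $\Phi_R([S/P_0])$ is $\cal F$-class-invariant on pairs outside $\cal H$, which one should reduce to the statement that $R \sim_{\cal F} P_0$ and $R \notin \cal H$ forces $R \sim_S P_0$, again an application of lemma \ref{lemNormalizerMap}. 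Everything else is formal from lemma \ref{lemInduceFromAStoAF}.
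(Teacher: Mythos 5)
Your choice of $\cal H$ (the proper $\cal F$-subconjugates of $P$) matches the paper's, but your starting set $X=[S/P_0]$ does not satisfy the fixed-point hypothesis of lemma \ref{lemInduceFromAStoAF}, and this is a genuine gap rather than bookkeeping. The hypothesis requires $\Phi_R(X)=\Phi_{R'}(X)$ for \emph{all} $\cal F$-conjugate pairs $R,R'\notin\cal H$, hence in particular for all $R,R'$ in the class $[P]_{\cal F}$ itself; the lemma only repairs stability at classes inside $\cal H$, so it cannot fix a failure at $[P]_{\cal F}$. For $X=[S/P_0]$ one has $\Phi_{P_0}(X)=\abs{W_S P_0}\neq 0$, while $\Phi_R(X)=0$ for any $R\sim_{\cal F}P_0$ with $R\not\sim_S P_0$ (since $\abs{R}=\abs{P_0}$, the only way to have $R\lesssim_S P_0$ is $R\sim_S P_0$). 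So the hypothesis fails whenever $[P]_{\cal F}$ contains more than one $S$-conjugacy class, which is the generic situation. Your fallback claim that all fully normalized members of $[P]_{\cal F}$ are mutually $S$-conjugate is also false: for $\cal F=\cal F_{V_4}(A_4)$ with $p=2$, the three subgroups of order $2$ are all $\cal F$-conjugate and all fully normalized (every subgroup of an abelian $S$ is), yet no two are $S$-conjugate. In that example $\Phi_{P_1}([V_4/P_0])=0\neq 2=\Phi_{P_0}([V_4/P_0])$ and $c_{P_1}([V_4/P_0])=0$, so property (ii) would fail for your $\alpha_P$ even if the lemma were applicable.

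The repair, which is what the paper does, is to start from the weighted sum over all $S$-classes inside $[P]_{\cal F}$,
\[X:=\sum_{[P']_S\subseteq[P]_{\cal F}}\frac{\abs{N_S P}}{\abs{N_S P'}}\cdot[S/P'],\]
with $P$ fully normalized. Then $\Phi_{P'}(X)=\frac{\abs{N_S P}}{\abs{N_S P'}}\cdot\abs{W_S P'}=\abs{W_S P}$ for every $P'\sim_{\cal F}P$, so $X$ is already stable on $[P]_{\cal F}$ (and trivially zero on every class not subconjugate to $P$), while the coefficients $\abs{N_S P}/\abs{N_S P'}$ are positive integers equal to $1$ exactly at the fully normalized $P'$ --- which is what yields property (ii) and keeps $c_Q(X)=0$ on $\cal H$. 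With this corrected $X$, the remainder of your argument (closure of $\cal H$ under $\cal F$-subconjugation and the deduction of (i)--(iii) from the conclusion of lemma \ref{lemInduceFromAStoAF}) goes through.
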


\begin{proof}
Let $P\leq S$ be fully $\cal F$-normalized. We let $X\in A_+(S)$ be the $S$-set
\[X:= \sum_{[P']_S \subseteq [P]_{\cal F}} \frac{\abs{N_S P}}{\abs{N_S P'}}\cdot [S/P']\quad \in A_+(S).\]
$X$ then satisfies that $\Phi_{Q}(X)=0$ unless $Q\lesssim_{S} P'$ for some $P'\sim_{\cal F} P$, in which case we have $Q\lesssim_{\cal F} P$. For all $P',P''\in [P]_{\cal F}$ we have $\Phi_{P''}([S/P']) = 0$ unless $P''\sim_{S} P'$; and consequently
\[\Phi_{P'}(X) = \frac{\abs{N_S P}}{\abs{N_S P'}}\cdot \Phi_{P'}([S/P'])
= \frac{\abs{N_S P}}{\abs{N_S P'}}\cdot \abs{W_S P'} = \abs{W_S P}\]
which doesn't depend on $P'\sim_{\cal F} P$.

Let $\cal H$ be the collection of all $Q$ which are $\cal F$-conjugate to a \emph{proper} subgroup of $P$, then $\Phi_{Q}(X) = \Phi_{Q'}(X)$ for all pairs $Q\sim_{\cal F} Q'$ not in $\cal H$. Using Lemma \ref{lemInduceFromAStoAF} we get some $\alpha_P\in A_+(\cal F)$ with the required properties.
\end{proof}

Properties \ref{itemWeyl} and \ref{itemNoOtherFullyNormalized} make it really simple to decompose a linear combination $X$ of the $\alpha_P$'s. The coefficient of $\alpha_P$ in $X$ is just the number of $[S/P]$-orbits in $X$ as an $S$-set - when $P$ is fully normalized. This is immediate since $\alpha_P$ contains exactly one copy of $[S/P]$, and no other $\alpha_Q$ contains $[S/P]$.

\enlargethispage{2\baselineskip}
In particular we have:
\begin{cor}\label{corLinIndependent}
The $\alpha_P$'s in Proposition \ref{propIrreducibleSets} are linearly independent.
\end{cor}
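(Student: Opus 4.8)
The plan is to deduce linear independence directly from properties \ref{itemWeyl} and \ref{itemNoOtherFullyNormalized} of Proposition \ref{propIrreducibleSets}, using the coordinate functions $c_Q$ on $A(S)$ restricted to fully normalized subgroups. Concretely, suppose we have a linear relation $\sum_{[P]_{\cal F}\in\ccset{\cal F}} n_P\cdot\alpha_P = 0$ in $A(S)$, where the sum runs over one representative $P$ for each $\cal F$-conjugacy class, and each $n_P\in\Z$. We want to show every $n_P$ vanishes.

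The key step is to apply, for each $\cal F$-conjugacy class, the homomorphism $c_{P_0}\colon A(S)\to\Z$ where $P_0$ is a \emph{fully $\cal F$-normalized} representative of that class. Since $c_{P_0}$ is a group homomorphism, applying it to the relation yields $\sum_{[P]_{\cal F}} n_P\cdot c_{P_0}(\alpha_P) = 0$. By property \ref{itemNoOtherFullyNormalized}, $c_{P_0}(\alpha_P) = 0$ whenever $P_0\not\sim_{\cal F} P$, i.e. whenever $[P]_{\cal F}\neq [P_0]_{\cal F}$; and by property \ref{itemWeyl}, $c_{P_0}(\alpha_{P_0}) = 1$ (here using that $P_0$ is fully normalized and $\cal F$-conjugate to itself). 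Hence the entire sum collapses to the single term $n_{P_0}\cdot 1 = 0$, so $n_{P_0} = 0$. As this holds for every conjugacy class, all coefficients vanish, and the $\alpha_P$'s are linearly independent. One should also note this shows they are linearly independent over $\Z$, hence (being a finite set in a free abelian group) over $\Q$ as well.

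I do not expect any genuine obstacle here: the work has already been done in establishing Proposition \ref{propIrreducibleSets}, and the corollary is essentially a bookkeeping observation — the matrix $\bigl(c_{Q}(\alpha_P)\bigr)$, indexed by fully normalized representatives, is unitriangular (indeed it is the identity on the diagonal and vanishes off-conjugacy-class), so it is invertible. The only minor care needed is to consistently choose fully normalized representatives for the index set, which is harmless since every $\cal F$-conjugacy class contains a fully normalized subgroup and $c_Q$ depends only on the $S$-conjugacy class of $Q$ while properties \ref{itemWeyl}–\ref{itemNoOtherFullyNormalized} are stated precisely for fully normalized subgroups. This is exactly the ``immediate'' decomposition remarked upon just before the corollary statement, so the proof can be stated in a couple of lines.
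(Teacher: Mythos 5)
Your proof is correct and follows exactly the paper's own reasoning: the remark preceding the corollary observes that, for $P$ fully normalized, $c_P(\alpha_Q)$ is $1$ when $Q\sim_{\cal F}P$ and $0$ otherwise, so applying the homomorphisms $c_P$ to any linear relation immediately forces all coefficients to vanish. No changes needed.
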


In order to prove that the $\alpha_P$'s generate all $\cal F$-stable sets, we will first show that the $\alpha_P$'s generate all the $\cal F$-stable elements in the Burnside ring. As a tool for proving this, we define a ghost ring for the Burnside ring $A(\cal F)$; and as consequence of how the proof proceeds, we end up showing an analogue of Proposition \ref{propYoshidaGroup} for saturated fusion systems, describing how the Burnside ring $A(\cal F)$ lies embedded in the ghost ring -- this is the content of Theorem \ref{thmYoshidaFusion}.

\begin{dfn}\label{dfnPhiF}
We defined the ghost ring $\free S$ for the Burnside ring of a group as the product ring $\prod_{[P]_S \in \ccset{S}} \Z$ where the coordinates correspond to the $S$-conjugacy classes of subgroups.
For the ring $A(\cal F)$, we now similarly define \emph{the ghost ring $\free{\cal F}$} as a product ring $\prod_{[P]_{\cal F} \in \ccset{\cal F}} \Z$ with coordinates corresponding to the $\cal F$-conjugacy classes of subgroups.

The surjection of indexing sets $\ccset{S} \to \ccset{\cal F}$ which sends an $S$-conjugacy class $[P]_S$ to its $\cal F$-conjugacy class $[P]_{\cal F}$, induces a homomorphism $\free{\cal F}\into\free S$ that embeds $\free{\cal F}$ as the subring of vectors which are constant on each $\cal F$-conjugacy class.

Since $A(\cal F)$ is the subring of $\cal F$-stable elements in $A(S)$, we can restrict the mark homomorphism $\Phi^S\colon A(S)\to \free S$ to the subring $A(\cal F)$ and get an injective ring homomorphism $\Phi^{\cal F}\colon A(\cal F) \to \free{\cal F}$ -- which is the \emph{homomorphism of marks} for $A(\cal F)$.

To model the cokernel of $\Phi^{\cal F}$ we define $Obs(\cal F)$ as
\[Obs(\cal F) := \prod_{\substack{[P]\in\ccset{\cal F}\\ P\text{ f.n.}}} (\Z / \abs{W_S P}\Z),\]
where 'f.n.' is short for 'fully normalized', so we take fully normalized representatives of the conjugacy classes in $\cal F$.
\end{dfn}

\setcounter{mainthm}1
\begin{mainthm}\label{thmYoshidaFusion}\label{dfnPsiF}
Let $\cal F$ be a saturated fusion system over a $p$-group $S$, and let $A(\cal F)$ be the Burnside ring of $\cal F$, i.e. the subring consisting of the $\cal F$-stable elements in the Burnside ring of $S$.
We then have a short-exact sequence
\[
0 \to A(\cal F) \xto{\Phi}  \free {\cal F} \xto{\Psi} Obs(\cal F) \to 0.
\]
where $\Phi=\Phi^{\cal F}$ is the homomorphism of marks, and $\Psi=\Psi^{\cal F}\colon \free{\cal F} \to Obs(\cal F)$ is a group homomorphism given by the $[P]$-coordinate functions
\[\Psi_{P}(\xi)  := \sum_{\bar s\in W_S P} \xi_{\gen s P} \pmod {\abs{W_S P}}\]
when $P\leq S$ is a fully normalized representative of the conjugacy class $[P]$ in $\cal F$.
Here $\Psi_P=\Psi_{P'}$ if $P\sim_{\cal F} P'$ are both fully normalized.
\end{mainthm}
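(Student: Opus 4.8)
The plan is to prove the short-exact sequence at the same time as the assertion that the sets $\alpha_P$ of proposition~\ref{propIrreducibleSets} form a $\Z$-basis of $A(\cal F)$ (corollary~\ref{corBurnsideBasis}); the mechanism is an index count inside the ghost ring $\free{\cal F}$. Concretely I will exhibit the chain of full-rank lattices
\[\Span_\Z\{\Phi^{\cal F}(\alpha_P)\}\ \subseteq\ \Phi^{\cal F}(A(\cal F))\ \subseteq\ \ker\Psi\ \subseteq\ \free{\cal F},\]
show that the leftmost lattice has index $\abs{Obs(\cal F)}$ in $\free{\cal F}$ and that $\ker\Psi$ also has index $\abs{Obs(\cal F)}$, and conclude that the three intermediate lattices agree.

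Several ingredients are routine. Injectivity of $\Phi=\Phi^{\cal F}$ is automatic, since it is the restriction to $A(\cal F)$ of the injective mark homomorphism $\Phi^S$ (proposition~\ref{propYoshidaGroup}). For the well-definedness of $\Psi$, if $P\sim_{\cal F}P'$ are both fully normalized then lemma~\ref{lemNormalizerMap} supplies $\ph\in\cal F(N_S P',N_S P)$ with $\ph(P')=P$, and $\abs{N_S P}=\abs{N_S P'}$ makes $\ph$ an isomorphism $N_S P'\xto{\sim}N_S P$ carrying $P'$ to $P$; it induces a bijection $W_S P'\to W_S P$, $\bar s\mapsto\bar{\ph(s)}$, under which $\gen s{P'}$ and $\ph(\gen s{P'})=\gen{\ph(s)}P$ are $\cal F$-conjugate, so the sums defining $\Psi_{P'}(\xi)$ and $\Psi_P(\xi)$ coincide for every $\xi\in\free{\cal F}$ (whose entries are constant on $\cal F$-conjugacy classes). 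Finally $\Psi\circ\Phi=0$, because for $X\in A(\cal F)$ and $P$ fully normalized, $\Psi_P(\Phi^{\cal F}(X))=\sum_{\bar s\in W_S P}\Phi_{\gen s P}(X)\pmod{\abs{W_S P}}$ is literally the $[P]_S$-coordinate of $\Psi^S(\Phi^S(X))$, which vanishes by the group case (proposition~\ref{propYoshidaGroup}); thus $\im\Phi\subseteq\ker\Psi$.

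The heart of the argument is the two index computations. Order the $\cal F$-conjugacy classes so that representatives have non-decreasing order. By property~\ref{itemSmaller} the square matrix $\bigl(\Phi_{[Q]_{\cal F}}(\alpha_P)\bigr)_{[Q],[P]}$ vanishes whenever $[Q]$ is not $\cal F$-subconjugate to $[P]$, in particular whenever $\abs Q>\abs P$, so it is triangular; by property~\ref{itemWeyl} its diagonal entries are the numbers $\abs{W_S P}$ (taking $P$ fully normalized). Hence its determinant is $\pm\prod_{[P]\ \mathrm{f.n.}}\abs{W_S P}=\pm\abs{Obs(\cal F)}$, which is nonzero (reproving the linear independence of corollary~\ref{corLinIndependent}) and equals the index $[\free{\cal F}:\Span_\Z\{\Phi^{\cal F}(\alpha_P)\}]$. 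For the second index I would prove $\Psi$ surjective directly. Writing $e_{[R]}\in\free{\cal F}$ for the indicator of the class $[R]_{\cal F}$, one has $\Psi_P(e_{[R]})=\#\{\bar s\in W_S P\mid\gen s P\sim_{\cal F}R\}\pmod{\abs{W_S P}}$; since $\gen s P\ge P$ this is zero unless $P\lesssim_{\cal F}R$, and when $[P]=[R]$ it is $1$ (because $\gen s P\ge P$ together with $\abs{\gen s P}=\abs P$ force $\gen s P=P$, i.e.\ $\bar s=\bar 1$). Ordering fully normalized representatives by non-increasing order, $P_i\lesssim_{\cal F}P_j$ with $i\ne j$ forces $\abs{P_i}<\abs{P_j}$, hence $i>j$, so $\bigl(\Psi_{P_i}(e_{[P_j]})\bigr)$ is lower triangular with $1$'s on the diagonal; the system $\Psi\bigl(\sum_j c_j e_{[P_j]}\bigr)=(t_i)_i$ can then be solved coordinate by coordinate for any target $(t_i)_i\in Obs(\cal F)$. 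Therefore $\Psi$ is onto and $[\free{\cal F}:\ker\Psi]=\abs{Obs(\cal F)}$.

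It remains to assemble. In the chain $\Span_\Z\{\Phi^{\cal F}(\alpha_P)\}\subseteq\Phi^{\cal F}(A(\cal F))\subseteq\ker\Psi\subseteq\free{\cal F}$ the outermost sublattice and $\ker\Psi$ have the same finite index $\abs{Obs(\cal F)}$ in $\free{\cal F}$, so all three intermediate lattices coincide. In particular $\ker\Psi=\im\Phi$, which is exactness at $\free{\cal F}$, and $\Phi^{\cal F}(\Span_\Z\{\alpha_P\})=\Phi^{\cal F}(A(\cal F))$, so by injectivity of $\Phi^{\cal F}$ the $\alpha_P$ span $A(\cal F)$ and hence form a $\Z$-basis (corollary~\ref{corBurnsideBasis}). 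Together with the injectivity of $\Phi$ and the surjectivity of $\Psi$ already obtained, the sequence is exact. I expect the main obstacle to be organising the two triangularity arguments so that they really interlock: one must check that $\cal F$-subconjugacy interacts with subgroup order tightly enough to produce genuine triangular matrices in both places, and that both sets of diagonal entries come out as exactly the $\abs{W_S P}$, so that the two indices are equal on the nose.
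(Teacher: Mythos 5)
Your proof is correct and follows essentially the same route as the paper: both arguments sandwich $\im\Phi$ between the lattice spanned by the $\Phi^{\cal F}(\alpha_P)$ and $\ker\Psi$ inside $\free{\cal F}$, and conclude by matching the two indices $\abs{Obs(\cal F)}$ coming from the triangular matrices. The differences are cosmetic — you phrase the comparison via lattice indices rather than cokernel orders, and you spell out the well-definedness of $\Psi_P$ and the surjectivity of $\Psi$ in more detail than the paper does.
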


\begin{proof}
We choose some total order of the conjugacy classes $[P],[Q]\in \ccset {\cal F}$ such that $\abs{P} > \abs {Q}$ implies $[P]< [Q]$, i.e. we take the subgroups in decreasing order. In holds in particular that $Q\lesssim_{\cal F} P$ implies $[P] \leq [Q]$.

With respect to the ordering above, the group homomorphism $\Psi$ is given by a lower triangular matrix with $1$'s in the diagonal, hence $\Psi$ is surjective. The mark homomorphism $\Phi=\Phi^{\cal F}$ is the restriction of the injective ring homomorphism $\Phi^S\colon A(S) \to \free{S}$, so $\Phi$ is injective.

We know from the group case, Proposition \ref{propYoshidaGroup}, that $\Psi^S\circ \Phi^S=0$. By construction we have $(\Psi)_P = (\Psi^S)_P$ for the coordinate functions when $P$ is fully normalized; and $\Phi$ is the restriction of $\Phi^S$. We conclude that $\Psi\circ \Phi=0$ as well.
It remains to be shown that $\im\Phi$ is actually all of $\ker \Psi$.

Consider the subgroup $H := \Span\{\alpha_P \mid [P]\in \ccset{\cal F}\}$ spanned by the $\alpha_P$'s in $A(\cal F)$, and consider also the restriction $\Phi|_H$ of the mark homomorphism $\Phi\colon A(\cal F) \to \free{\cal F}$.

$\Phi|_H$ is described by a square matrix $M$ in terms of the ordered bases of $H=\Span\{\text{$\alpha_P$'s}\}$ and $\free {\cal F}$.
Because $ M_{[Q],[P]} :=\Phi_{Q}(\alpha_P)$ is zero unless $P\sim_{\cal F} Q$ or $\abs P > \abs Q$, we conclude that $M$ is a lower triangular matrix. The diagonal entries of $M$ are
\[M_{[P],[P]} = \Phi_{P}(\alpha_P) = \abs{W_S P},\]
when $P$ is fully normalized.

All the diagonal entries are non-zero, so the cokernel of $\Phi|_H$ is finite of order
\[\abs{\coker \Phi|_H} = \prod_{[P]\in \ccset {\cal F}} M_{[P],[P]}= \prod_{\substack{[P]\in\ccset{\cal F}\\ P\text{ f.n.}}} \abs{W_S P}.\]
Since $\Phi|_H$ is a restriction of $\Phi$, it follows that $\abs{\coker \Phi} \leq \abs{\coker \Phi|_H}$.
At the same time, $\Psi\circ \Phi=0$ implies that $\abs{\coker \Phi} \geq \abs{Obs(\cal F)}$.

We do however have
\[\abs{Obs(\cal F)} = \prod_{\substack{[P]\in\ccset{\cal F}\\ P\text{ f.n.}}} \abs{W_S P} = \abs{\coker \Phi|_H}.\]
The only possibility is that $\ker \Psi =\im\Phi = \im \Phi|_H$, completing the proof of Theorem \ref{thmYoshidaFusion}.
\end{proof}

From the last equality $\im\Phi = \im\Phi|_H$, and the fact that $\Phi$ is injective, it also follows that $A(\cal F)= H$ so the $\alpha_P$'s span all of $A(\cal F)$. Combining this with Corollary \ref{corLinIndependent} we get:
\begin{cor}\label{corBurnsideBasis}
The $\alpha_P$'s form an additive basis for the Burnside ring $A(\cal F)$.
\end{cor}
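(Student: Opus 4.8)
The plan is to assemble what has already been established, since the corollary is a direct consequence of the proof of Theorem \ref{thmYoshidaFusion} together with Corollary \ref{corLinIndependent}. Write $H := \Span\{\alpha_P \mid [P]\in \ccset{\cal F}\}\subseteq A(\cal F)$ for the subgroup spanned by the $\cal F$-stable sets constructed in Proposition \ref{propIrreducibleSets}. Recall that the proof of Theorem \ref{thmYoshidaFusion} does more than identify $\ker\Psi$: by a cokernel count it sandwiches $\abs{\coker\Phi}$ between $\abs{Obs(\cal F)}$ (from $\Psi\circ\Phi=0$) and $\abs{\coker\Phi|_H}=\prod_{[P]\text{ f.n.}}\abs{W_SP}$ (from the lower-triangular matrix $M_{[Q],[P]}=\Phi_Q(\alpha_P)$ with diagonal entries $\abs{W_SP}\neq 0$), and since $\abs{Obs(\cal F)}=\abs{\coker\Phi|_H}$ these bounds collapse, forcing the equality $\im\Phi = \im\Phi|_H$ of images inside $\free{\cal F}$.

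From here the corollary is immediate. First I would deduce $H = A(\cal F)$: given any $x\in A(\cal F)$, the element $\Phi(x)$ lies in $\im\Phi = \im\Phi|_H$, so $\Phi(x)=\Phi(h)$ for some $h\in H$; and since $\Phi = \Phi^{\cal F}$ is injective (being the restriction of the injective mark homomorphism $\Phi^S\colon A(S)\to\free S$), this gives $x = h\in H$. Hence the $\alpha_P$'s span $A(\cal F)$. Combining this with Corollary \ref{corLinIndependent}, which records their linear independence --- a consequence of properties \ref{itemWeyl} and \ref{itemNoOtherFullyNormalized}, namely that $\alpha_P$ contains exactly one orbit $[S/P]$ when $P$ is fully normalized while $[S/P]$ occurs in no other $\alpha_Q$ --- we conclude that $\{\alpha_P\}_{[P]\in\ccset{\cal F}}$ is simultaneously a spanning set and linearly independent, hence an additive basis of $A(\cal F)$.

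There is no real obstacle in this particular step: the substantive work has already been done in Proposition \ref{propIrreducibleSets} (constructing the $\alpha_P$'s), in Lemma \ref{lemMoreFixpoints} (ensuring the correction coefficients $\lambda_{P'}$ are non-negative, so that the $\alpha_P$'s are honest $S$-sets and not merely virtual elements), and in the cokernel estimate of Theorem \ref{thmYoshidaFusion}. The only point worth making explicit is that this also pins down the rank of $A(\cal F)$ as a free abelian group to be $\abs{\ccset{\cal F}}$, read off from the triangular shape of $M$, so the $\alpha_P$'s form a genuine basis rather than a spanning family of some larger free group --- a fact that will be used in the next stage when one argues that they already form an additive basis of the semiring $A_+(\cal F)$.
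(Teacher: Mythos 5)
Your argument is correct and is essentially identical to the paper's: both deduce $A(\mathcal F)=H$ from the equality $\im\Phi=\im\Phi|_H$ established in the proof of Theorem \ref{thmYoshidaFusion} together with the injectivity of $\Phi$, and then invoke Corollary \ref{corLinIndependent} for linear independence. No gaps.
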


The corollary tells us that any element $X\in A(\cal F)$ can be written uniquely as an integral linear combination of the $\alpha_P$'s. In particular, any $\cal F$-stable set can be written as a linear combination of $\alpha_P$'s, and if the coefficients are all non-negative, then we have a linear combination in $A_+(\cal F)$.

\setcounter{mainthm}{0}
\begin{mainthm}\label{thmMonoidBasis}
Let $\cal F$ be a saturated fusion system over a $p$-group $S$.
The sets $\alpha_P$ from Proposition \ref{propIrreducibleSets} are all the irreducible $\cal F$-stable sets, and every $\cal F$-stable set splits uniquely, up to $S$-isomorphism, as a disjoint union of the $\alpha_P$'s.

Hence the semiring $A_+(\cal F)$ of $\cal F$-stable sets is additively a free commutative monoid with rank equal to the number of conjugacy classes of subgroups in $\cal F$.
\end{mainthm}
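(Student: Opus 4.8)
The plan is to leverage Corollary \ref{corBurnsideBasis}, which already tells us that the $\alpha_P$'s form an additive $\Z$-basis for the Burnside ring $A(\cal F)$. So every $\cal F$-stable set $X\in A_+(\cal F)$ can be written \emph{uniquely} as $X = \sum_{[P]_{\cal F}} n_P\, \alpha_P$ with $n_P\in\Z$; uniqueness of the decomposition over $\Z$ is free, and the only thing to prove is that these coefficients are in fact non-negative whenever $X$ is an honest $S$-set, plus that the $\alpha_P$ are themselves irreducible (hence genuinely the ``atoms'').

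First I would show $n_P\geq 0$ for all $[P]_{\cal F}$. Fix a fully normalized representative $P$ of each class and order the classes by decreasing subgroup order as in the proof of Theorem \ref{thmYoshidaFusion}. Proceeding through the $\alpha_P$ in this order, I read off $n_P = c_P(X)$: indeed, by property \ref{itemWeyl} of Proposition \ref{propIrreducibleSets} each $\alpha_P$ contains exactly one $[S/P]$-orbit (with $P$ fully normalized), and by property \ref{itemSmaller} no $\alpha_Q$ with $Q\not\sim_{\cal F}P$ and $\abs{Q}\leq\abs{P}$ contributes an $[S/P]$-orbit, while property \ref{itemNoOtherFullyNormalized} handles the remaining classes of the same order — so the coefficient of $[S/P]$ in $\sum n_Q\alpha_Q$ is exactly $n_P$. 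Since $X$ is an $S$-set, $c_P(X)\geq 0$, hence $n_P\geq 0$. This shows $A_+(\cal F)$ is exactly the set of non-negative integer combinations of the $\alpha_P$, i.e. $A_+(\cal F)$ is the free commutative monoid on $\{\alpha_P\}$, of rank $\abs{\ccset{\cal F}}$.

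It remains to check the $\alpha_P$ are irreducible, i.e. indecomposable in $A_+(\cal F)$, and that they are \emph{all} the irreducibles. If $\alpha_P = Y + Z$ with $Y,Z\in A_+(\cal F)$, then writing $Y,Z$ in the basis gives $\alpha_P = \sum(m_Q + m'_Q)\alpha_Q$ with all $m_Q, m'_Q\geq 0$; by the uniqueness just established, $m_Q + m'_Q = \delta_{[Q]_{\cal F},[P]_{\cal F}}$, forcing one of $Y,Z$ to be $\alpha_P$ and the other $0$ (the empty set). So each $\alpha_P$ is irreducible. Conversely, any irreducible $\cal F$-stable set is a basis element, since a non-negative combination involving two or more basis elements (with multiplicity) visibly decomposes; hence the irreducibles are precisely the $\alpha_P$, one per $\cal F$-conjugacy class. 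Finally, $A_+(\cal F)$ being free on $\{\alpha_P\}$ is exactly the statement that every $\cal F$-stable set splits uniquely as a disjoint union of the $\alpha_P$'s, and the one-to-one correspondence with $\ccset{\cal F}$ is built into Proposition \ref{propIrreducibleSets}.

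I expect the only genuine obstacle to be the non-negativity step, and that obstacle has essentially already been absorbed into Lemma \ref{lemMoreFixpoints} and the construction in Lemma \ref{lemInduceFromAStoAF}: those results are precisely what guarantee the $\alpha_P$ live in $A_+(\cal F)$ and have the clean mark/orbit data in Proposition \ref{propIrreducibleSets} that make the triangular change-of-basis argument work. Given Corollary \ref{corBurnsideBasis}, everything else here is bookkeeping with the partial order on conjugacy classes.
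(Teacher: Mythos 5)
Your proposal is correct and follows essentially the same route as the paper: write $X$ in the $\Z$-basis $\{\alpha_P\}$ from Corollary \ref{corBurnsideBasis}, identify the coefficient of $\alpha_P$ with $c_P(X)\geq 0$ for $P$ fully normalized using properties \ref{itemWeyl} and \ref{itemNoOtherFullyNormalized} (note that \ref{itemNoOtherFullyNormalized} alone already kills $c_P(\alpha_Q)$ for all $Q\not\sim_{\cal F}P$, so the case split by order is unnecessary), and conclude freeness of the monoid. The irreducibility bookkeeping you add at the end is standard and consistent with what the paper leaves implicit.
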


\begin{proof}
Let $\alpha_P\in A_+(\cal F)$ for each conjugacy class $[P]\in \ccset{\cal F}$ be given as in Proposition \ref{propIrreducibleSets}. Let $X\in A_+(\cal F)$ be any $\cal F$-stable $S$-set.

Since the $\alpha_P$'s form a basis for $A(\cal F)$ by Corollary \ref{corBurnsideBasis}, we can write $X$ uniquely as
\[X= \sum_{[P]\in \ccset{\cal F}} \lambda_{P}\cdot \alpha_P\]
with $\lambda_{P}\in \Z$.

\enlargethispage{2\baselineskip}
Suppose that $P$ is fully normalized, then $c_{P}(\alpha_Q)=1$ if $P\sim_{\cal F} Q$, and $c_{P}(\alpha_Q)=0$ otherwise. As a consequence of this, we have
\[c_{P}(X) = \sum_{[Q]\in \ccset{\cal F}} \lambda_{Q}\cdot c_{P}( \alpha_Q) = \lambda_{P}\]
whenever $P$ is fully normalized.

Because $X$ is an $S$-set, we see that $\lambda_{P} = c_{P}(X)\geq 0$. Hence the linear combination $X= \sum_{[P]\in \ccset{\cal F}} \lambda_{P}\cdot \alpha_P$ has nonnegative coefficients, i.e. it is a linear combination in the semiring $A_+(\cal F)$.

As a special case, if we have another element $\alpha'_P$ in $A(\cal F)$ satisfying the properties of Proposition \ref{propIrreducibleSets}, then the fact that $\lambda_{Q} = c_{Q}(\alpha'_P)$ for all fully normalized $Q\leq S$, shows that $\lambda_{P}=1$ and $\lambda_{Q}=0$ for $Q\not\sim_{\cal F} P$. Thus the linear combination above simplifies to $\alpha'_P = \alpha_P$. Hence the $\alpha_P$'s are uniquely determined by the properties of Proposition \ref{propIrreducibleSets}.
\end{proof}

\appendix
\section{The monoid of complex representations}\label{secRepresentations}
For a saturated fusion system $\cal F$ over $S$, it makes sense to talk about $\cal F$-stability of $S$-representations instead of $S$-sets. In this appendix we show that the analogue of Theorems \ref{thmMonoidBasis} and \ref{thmMonoidBasisIntroGroup} fails for representations in general by giving an example where the abelian monoid of $\cal F$-stable complex representations is not free.

For a finite dimensional complex representation $\rho\colon S\to GL_n(\C)$ of $S$, we can restrict $\rho$ along any fusion map $\ph\in \cal F(P,S)$ to form a representation $\prescript{}{P,\ph}\rho:=\rho\circ \ph$ of the subgroup $P\leq S$. Just as for finite $S$-sets, we compare each $\prescript{}{P,\ph}\rho$ to the usual restriction $\prescript{}{P,incl}\rho$ and say that $\rho$ is $\cal F$-stable if
\begin{equation*}%\label{charFstablerepr}
\parbox[c]{.9\textwidth}{\emph{$\prescript{}{P,\ph}\rho$ is isomorphic to $\prescript{}{P,incl}X$ as representations of $P$, for all $P\leq S$ and homomorphisms $\ph\colon P\to S$ in $\cal F$.}}
\end{equation*}
The isomorphism classes of $\cal F$-stable complex $S$-representations form an abelian monoid $R_+(\cal F)$ with direct sum of representations as the addition. As we know, the isomorphism class of any complex character is determined completely by the associated character. Our first order of business is therefore to determine which characters belong to $\cal F$-stable representations. We say that a character $\chi\colon S\to \C$ is $\cal F$-stable if it satisfies $\chi(s)=\chi(\ph(s))$ for all elements $s\in S$ and maps $\ph\in \cal F(\gen{s},S)$, that is $\chi$ should be constant on each conjugacy class in $\cal F$ of elements in $S$.
\begin{lem}\label{lemFstablechar}
Let $\rho\colon S\to GL_n(\C)$ be a representation, and let $\chi\colon S\to \C$ be the associated character.
Then $\rho$ is $\cal F$-stable if and only if $\chi$ is $\cal F$-stable.
\end{lem}

\begin{proof}
Consider a subgroup $P\leq S$ and a map $\ph\in\cal F(P,S)$, then the character associated to the restriction $\prescript{}{P,\ph}\rho = \rho \circ \ph$ is equal to $\chi\circ \ph$. The representation $\prescript{}{P,\ph}\rho$ is isomorphic to $\prescript{}{P,incl}\rho$ precisely when they have the same character on $P$, that is whenever $\chi\circ \ph = \chi|_P$, which on elements becomes $\chi(\ph(s)) = \chi(s)$ for all $s\in P$.

We now immediately conclude that $\rho$ is $\cal F$-stable if and only if $\chi(\ph(s)) = \chi(s)$ for all $s\in P$, $P\leq S$ and $\ph\in \cal F(P,S)$. By restricting each $\ph$ to the cyclic subgroup $\gen{s}\leq P$, it is enough to check that $\chi(\ph(s)) = \chi(s)$ for all $s\in S$ and $\ph\in \cal F(\gen{s},S)$, i.e. that $\chi$ is $\cal F$-stable.
\end{proof}

Using lemma \ref{lemFstablechar} to characterize the $\cal F$-stable representations, we will now study the example below and see that $R_+(\cal F)$ is not a free abelian monoid for this particular choice of $\cal F$.
\begin{ex}\label{exCounterexampleRepr}
We consider the saturated fusion system $\cal F:=\cal F_{\SD_{16}}(\PGL_3(\F_3))$ induced by the projective general linear group $\PGL_3(\F_3)$ on the semidihedral group of order $16$, i.e. the group $\SD_{16}=\gen{D,S \mid D^8=S^2=1, SDS^{-1} = D^3}$. One possible inclusion of $\SD_{16}$ inside $\PGL_3(\F_3)$ has as matrix representatives:
\[\begin{pmatrix}1 & 0 & 0 \\ 0 & 1 & 1 \\ 2 & 1 & 0\end{pmatrix}\text{ represents $D$, and }\begin{pmatrix}1 & 0 & 0 \\ 0 & 1 & 2\\ 0 & 0 & 2\end{pmatrix}\text{ represents $S$.}\]
The group $\SD_{16}$ has $7$ conjugacy classes of elements, and $7$ irreducible characters. These are all listed in the character table below:
\begin{equation*}
\begin{array}{c|ccccccc}
    & 1 & D,D^3 & D^5,D^7 & D^2,D^6 & D^4 & S,D^2S,D^4S,D^6S & DS, D^3S, D^5S, D^7S \\\hline
 \textbf{1} & 1 & 1 &1 &1 &1 &1 &1 \\
 \chi_a & 1 & -1 & -1 & 1 & 1 & -1 & 1 \\
 \chi_b & 1 & -1 & -1 & 1 & 1 & 1 & -1 \\
 \chi_{ab} & 1 & 1 & 1 & 1 & 1 & -1 & -1 \\
 \chi_2 & 2 & 0 & 0 & -2 & 2 & 0 & 0 \\
 \chi_i & 2 & i\sqrt2 & -i\sqrt2 & 0 & -2 & 0 & 0 \\
 \chi_{-i} & 2 & -i\sqrt2 & i\sqrt2 & 0 & -2 & 0 & 0
\end{array}
\end{equation*}
Inside $\cal F$, the class of $D^4$ becomes conjugate to the class of $S$, and class of $D^2$ becomes conjugate to the class of $DS$. None of the other conjugacy classes in $\SD_{16}$ are fused in $\cal F$. The $\cal F$-stable characters are therefore precisely the characters where the $4$'th value is equal to the $7$'th value, and the $5$'th is equal to the $6$'th.

Note that $\textbf 1$ is the only irreducible character for $\SD_{16}$ that is $\cal F$-stable.
Adding rows we see that the following four characters are also $\cal F$-stable: $\alpha:=\chi_a + \chi_i$, $\beta:=\chi_a+\chi_{-i}$, $\gamma:=\chi_b+\chi_2+\chi_i$ and $\delta:=\chi_b+\chi_2+\chi_{-i}$. Each of these four characters cannot be written as a sum of smaller $\cal F$-stable characters, so $\alpha$, $\beta$, $\gamma$ and $\delta$ correspond to representations that are irreducible in $R_+(\cal F)$.
At the same time, however, we have
\[\chi_a+\chi_b+\chi_2+\chi_i+\chi_{-i} = \alpha + \delta = \beta + \gamma.\]
Hence $\chi_a+\chi_b+\chi_2+\chi_i+\chi_{-i}$ corresponds to an element of $R_+(\cal F)$ that can be written as a sum of two irreducible elements in two different ways. The abelian monoid $R_+(\cal F)$ is therefore \emph{not} free, so the analogue of Theorem \ref{thmMonoidBasis} for complex representations is false.
\end{ex}

\makeatletter
\def\eprint#1{\@eprint#1 }
\def\@eprint #1:#2 {%
    \ifthenelse{\equal{#1}{arXiv}}%
        {\href{http://front.math.ucdavis.edu/#2}{arXiv:#2}}%
        {\href{#1:#2}{#1:#2}}%
}
\makeatother
\renewcommand{\PrintDOI}[1]{DOI \href{http://doi.org/#1}{#1}}

\begin{bibdiv}
\begin{biblist}
\bibselect{bibliography}
\end{biblist}
\end{bibdiv}

\end{document}